\tikzstyle{punkt}=[circle, fill=black, minimum size=1mm,inner sep=0pt, draw]
\def\NZQ{\mathbb}               
\def\ZZ{{\NZQ Z}}
\def\frk{\frak}               
\def\Phi{{\frk n}}
\def\Phi{{\frk N}}
\def\MB{{\mathcal B}}
\def\MT{{\mathcal T}}
\def\MS{{\mathcal S}}
\def\MM{{\mathcal M}}
\def\KK{{\mathbb K}}
\def\opn#1#2{\def#1{\operatorname{#2}}} 
\opn\chara{char}
\opn\length{\ell}
\opn\pd{pd}
\opn\rk{rk}
\opn\projdim{proj\,dim}
\opn\injdim{inj\,dim}
\opn\rank{rank}
\opn\depth{depth}
\opn\grade{grade}
\opn\height{height}
\opn\embdim{emb\,dim}
\opn\codim{codim}
\opn\Tr{Tr}
\opn\bigrank{big\,rank}
\opn\superheight{superheight}
\opn\lcm{lcm}
\opn\trdeg{tr\,deg}
\opn\reg{reg}
\opn\lreg{lreg}
\opn\ini{in}
\opn\lpd{lpd}
\opn\size{size}
\opn\bigsize{bigsize}
\opn\cosize{cosize}
\opn\bigcosize{bigcosize}
\opn\sdepth{sdepth}
\opn\sreg{sreg}
\opn\link{link}
\opn\fdepth{fdepth}
\opn\lin{lin}
\opn\ini{in}
\opn\inm{inm}
\opn\div{div}
\opn\Div{Div}
\opn\cl{cl}
\opn\Cl{Cl}
\opn\Spec{Spec}
\opn\Supp{Supp}
\opn\supp{supp}
\opn\Sing{Sing}
\opn\Ass{Ass}
\opn\Min{Min}
\opn\Mon{Mon}
\opn\dstab{dstab}
\opn\astab{astab}
\opn\Syz{Syz}
\opn\Ann{Ann}
\opn\Rad{Rad}
\opn\Soc{Soc}
\opn\Im{Im}
\opn\Ker{Ker}
\opn\Coker{Coker}
\opn\Am{Am}
\opn\Hom{Hom}
\opn\Tor{Tor}
\opn\Ext{Ext}
\opn\End{End}
\opn\Aut{Aut}
\opn\id{id}
\opn\nat{nat}
\opn\pff{pf}
\opn\Pf{Pf}
\opn\GL{GL}
\opn\SL{SL}
\opn\mod{mod}
\opn\ord{ord}
\opn\Gin{Gin}
\opn\Hilb{Hilb}
\opn\sort{sort}
\opn\initial{init}
\opn\ende{end}
\opn\height{height}
\opn\type{type}
\opn\mdeg{mdeg}
\opn\aff{aff}
\opn\con{conv}
\opn\relint{relint}
\opn\st{st}
\opn\lk{lk}
\opn\cn{cn}
\opn\core{core}
\opn\vol{vol}
\opn\link{link}
\opn\star{star}
\opn\lex{lex}
\opn\sign{sign}
\opn\gr{gr}
\def\pot#1#2{#1[\kern-0.28ex[#2]\kern-0.28ex]}
\opn\dirlim{\underrightarrow{\lim}}
\opn\inivlim{\underleftarrow{\lim}}
\def\Implies{\ifmmode\Longrightarrow \else
	\unskip${}\Longrightarrow{}$\ignorespaces\fi}
\def\implies{\ifmmode\Rightarrow \else
	\unskip${}\Rightarrow{}$\ignorespaces\fi}
\def\iff{\ifmmode\Longleftrightarrow \else
	\unskip${}\Longleftrightarrow{}$\ignorespaces\fi}
\newtheorem{Theorem}{Theorem}[section]
\newtheorem{Lemma}[Theorem]{Lemma}
\newtheorem{Corollary}[Theorem]{Corollary}
\newtheorem{Proposition}[Theorem]{Proposition}
\newtheorem{Remark}[Theorem]{Remark}
\newtheorem{Definition}[Theorem]{Definition}
\let\epsilon\varepsilon
\let\kappa=\varkappa
\def\pnt{{\raise0.5mm\hbox{\large\bf.}}}
\begin{document}
	\title{Induced matchings in Strongly biconvex graphs and some  algebraic applications}
	\author {Sara Saeedi Madani and Dariush Kiani}

	\address{Sara Saeedi Madani, Faculty of Mathematics and Computer Science, Amirkabir University of Technology (Tehran Polytechnic), Tehran, Iran, and School of Mathematics, Institute for Research in Fundamental Sciences (IPM), Tehran, Iran}
	\email{sarasaeedi@aut.ac.ir}

	\address{Dariush Kiani, Faculty of Mathematics and Computer Science, Amirkabir University of Technology (Tehran Polytechnic), Tehran, Iran, and School of Mathematics, Institute for Research in Fundamental Sciences (IPM), Tehran, Iran}
	\email{dkiani@aut.ac.ir, dkiani7@gmail.com}

	\begin{abstract}
In this paper, motivated by a question posed in \cite{AH}, we introduce strongly biconvex graphs as a subclass of weakly chordal and bipartite graphs. We give a linear time algorithm to find an induced matching for such graphs and we prove that this algorithm indeed gives a maximum induced matching. Applying this algorithm, we provide a strongly biconvex graph whose (monomial) edge ideal does not admit a unique extremal Betti number. Using this constructed graph, we provide an infinite family of the so-called closed graphs (also known as proper interval graphs) whose binomial edge ideals do not have a unique extremal Betti number. This, in particular, answers the aforementioned question in \cite{AH}.   
	\end{abstract}

	
	\subjclass[2010]{Primary 05E40, 13D02; Secondary 05C70}
	\keywords{Strongly biconvex graph, maximum induced matching, monomial and binomial edge ideals, extremal Betti numbers.}
	
	\maketitle
	
	\section{Introduction}\label{introduction}
	
	Matchings are important and well-studied classical objects in graph theory. A certain type of matchings which provide an induced subgraph of the underlying graph, called an \emph{induced matching}, is also of interest in the literature. The maximum size of a matching in a graph $G$, denoted by $\nu(G)$, is called the \emph{matching number} of $G$, and the maximum size of an induced matching in $G$, denoted by $\inm(G)$ is called the \emph{induced matching number} of $G$. Induced matchings of graphs have many applications in the real world problems. They can be used to model uninterrupted communications between broadcasters and receivers. Induced matchings can also be used to capture a number of network problems, like network scheduling, gathering and testing. See for example \cite{BBKMT, BKMS, EGMT,GL}. 
	
	There have been and still are many attempts to find algorithms for  maximum (induced) matchings in the last decades. In \cite{SY}, a linear time algorithm was given for maximum matching in convex bipartite graphs, i.e. graphs whose bipartition admits a certain labeling. But, in general, finding a maximum induced matching in a graph is NP-hard, even in the class of bipartite graphs. Algorithms for finding a maximum induced matching were investigated in various families of graphs. In the case of bipartite graphs and biconvex graphs as a subclass of them were studied in \cite{DDL} and \cite{AS} respectively. In \cite{CST}, a polynomial time algorithm for finding a maximal induced matching in weakly chordal graphs was given while a linear time algorithm was provided for chordal graphs in \cite{BH}. In this paper, we give a linear time algorithm to find a maximum induced matching for a subclass of biconvex graphs which we call them \emph{strongly biconvex} graphs. It is observed that strongly biconvex graphs are also weakly chordal.  

    Maximum (induced) matchings also play role in the connection of graph theory and algebra. Recall that the (monomial) \emph{edge ideal} $I(G)$ of an $n$-vertex graph $G$ is the ideal in the polynomial ring $R=\KK[x_1,\ldots,x_n]$ generated by quadratics $x_ix_j$ where $\{i,j\}$ is an edge of $G$. The values $\inm(G)$ and $\nu(G)$ are lower and upper bounds for the (Castelnuovo-Mumford) regularity of the (monomial) edge ideal of a graph $G$, see \cite{Ka} and \cite{HV} respectively. In certain families of graphs, it is known that the lower bound is attained. Among them are weakly chordal graphs, see \cite{W}. 
    
    An algebraic topic of study in the case of (monomial) edge ideals which has been of interest of several authors, is the study of extremal Betti numbers of those ideals, see for example \cite{HKM}. A nonzero graded Betti number $\beta_{i,j}(R/I(G))$ is extremal if $\beta_{k,\ell}(R/I(G))=0$ for all $k\geq i$ and $\ell\geq j$ with $(k,\ell)\neq (i,j)$. A problem here is concerning uniqueness or non-uniqueness of the extremal Betti numbers. In this paper, benefiting from our algorithm, we construct a strongly biconvex graph whose (monomial) edge ideal does not have a unique extremal Betti number which is helpful for our further issues.   
    
    The same problem concerning the extremal Betti numbers has been also considered recently for another class of ideals attached to graphs, called \emph{binomial edge ideals}. The binomial edge ideal of a graph $G$, denoted by $J_G$, is the ideal in $S=\KK[x_1,\ldots,x_n,y_1,\ldots,y_n]$  generated by the binomials $x_iy_j-x_jy_i$. See \cite{HHHKR} and \cite{O}. The extremal Betti numbers of the binomial edge ideal of certain graphs were studied in \cite{AH} and \cite{HR}. In \cite{AH}, the authors also posed a question, see \cite[Question~1]{AH}. Indeed, the authors ask in this question if the initial ideals (with respect to the lexicographic order induced by $x_1>\cdots >x_n>y_1>\cdots >y_n$) of the so-called closed graphs have the unique extremal Betti number. Here we give a negative answer to this question which was in fact the first motivation of this paper.  
	
	The organization of this paper is as follows. In Section~\ref{biconvex}, we introduce strongly biconvex graphs as a subclass of biconvex graphs and, beside studying some of their properties, we provide our algorithm, which runs in linear time, for finding an induced matching for such graphs. We also show that the induced matching given by this algorithm is maximum. In Section~\ref{3-disjoint set of complete bipartite subgraphs}, we first recall the notion of strongly disjoint families of complete bipartite subgraphs from \cite{K} which is a key concept in the sequel for us. Then, we investigate the strongly disjoint families of complete bipartite subgraphs for strongly biconvex graphs and prove some lemmata which enable us to simplify the problems in the next section. Finally, Section~\ref{edge ideals} is devoted to the applications to the monomial and binomial edge ideals of graphs, respectively. As a consequence of some investigations of Section~\ref{3-disjoint set of complete bipartite subgraphs}, we give a formula for the projective dimension of the (monomial) edge ideals of strongly biconvex graphs in terms of certain subgraphs of them. We also construct a strongly biconvex graph $H_0$ such that $R/I(H_0)$ has more than one extremal Betti numbers. We prove this, by showing that $\beta_{p,p+4}(R/I(H_0))=0$, where $p=\projdim (R/I(H_0))$. To do this, a crucial tool is Kimura's non-vanishing theorem from \cite{K} as well as the fact that our algorithm indeed computes the regularity of $R/I(H_0)$. 
	Eventually, this graph leads us to provide an infinite family of closed graphs whose binomial edge ideals have more than one extremal Betti numbers which gives an affirmative answer to \cite[Question~1]{AH}.

	\section{Strongly biconvex graphs and their maximum induced matchings}\label{biconvex}
	
	In this section, we introduce a class of bipartite graphs, called strongly biconvex graphs, and investigate some of their properties. We also provide an algorithm to find an induced matching for strongly biconvex graphs and we show that this algorithm gives a maximum induced matching. We also show that this algorithm runs in linear time. 
	
	First, we recall the definition of convex bipartite graphs. Assume that $H$ is a bipartite graph with bipartition $X\cup Y$. For simplicity, we denote such a bipartite graph by $H=(X,Y)$. Let $E(H)$ be the edge set of $H$. Then $H$ is called $X$-\emph{convex} if there is an ordering on $X$ such that if $\{x_j,y_i\}\in E(H)$ and $\{x_k,y_i\}\in E(H)$ with $x_j, x_k\in X$ and $j < k$, then $\{x_p,y_i\}\in E(H)$ for all $p=j,\ldots,k$, (see for example \cite{SY}). A $Y$-\emph{convex} graph is defined similarly. 
	
	Recall that for any vertex $v$ of a graph $H$, the set of those  vertices of $H$ which are adjacent to $v$ is denoted by $N_{H}(v)$. The degree of $v$ in $H$, denoted by $\deg_H(v)$, is the number of elements of $N_{H}(v)$. It is easily seen that a bipartite graph $H=(X,Y)$ is $X$-convex (resp. $Y$-convex) if and only if $X$ (resp. $Y$) can be ordered so that the neighborhood of every vertex in $Y$ (resp. $X$) is labeled by a closed interval. Here, by a closed interval $[i,j]$ for $i<j$, we mean $\{s: i\leq s\leq j\}$. Half-closed intervals are defined accordingly. 
	
	A bipartite graph $H$ is called \emph{biconvex} if it is both $X$-convex and $Y$-convex (see for example \cite{AS}). Next, we introduce the new notion of strongly biconvex graphs which play an important role in this paper.
	
	\begin{Definition}\label{strongly biconvex}
		\em Let $H=(X,Y)$ be a bipartite graph with $X=\{x_q,x_{q+1},\dots,x_f\}$ and $Y=\{y_{q'},y_{q'+1},\ldots,y_{g}\}$ for some $q,q'\geq 1$. Then we call $H$ a \emph{strongly biconvex} graph (with respect to the given labeling) if the following conditions hold: 
		\begin{enumerate}
		 \item if $\{x_i,y_j\}\in E(H)$, then $i<j$; 
		 \item for any $r$ with $i<r<j$ and $\{x_i,y_j\}\in E(H)$, we have:
		   \begin{itemize}
		   	\item [(i)] if $x_r\in X$, then $\{x_r,y_j\}\in E(H)$; 
		   	\item [(ii)] if $y_r\in Y$, then $\{x_i,y_r\}\in E(H)$. 
		   \end{itemize} 
		\end{enumerate}   
	\end{Definition} 
	
	Note that in the last two conditions of the above definition, $x_r\in X$ or $y_r\in Y$ does not occur necessarily. Indeed, if $r>f$ or $r<q'$, then $x_r\notin X$ or $y_r\notin Y$, respectively. 
	
	The above definition is clearly based on a given labeling. We say that a graph is strongly biconvex if there exists a labeling for which the conditions of the above definition are fulfilled. Throughout the paper, when we say that $H=(X,Y)$ is a strongly biconvex graph, we mean with respect to the given labeling on $X$ and $Y$ as in Definition~\ref{strongly biconvex}. Note that by our definition, it is clear that any strongly biconvex graph is a biconvex graph. Figure~\ref{strongly biconvex example} depicts a strongly biconvex graph.   

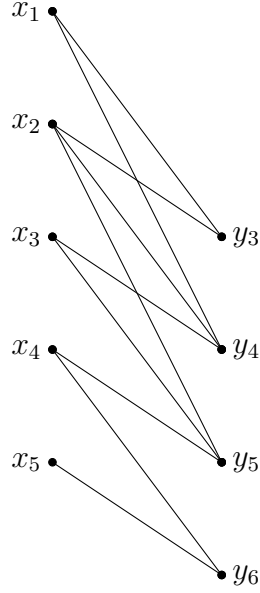
\begin{figure}[h!]
	\centering
	{	\begin{tikzpicture}[scale = 1.5]
		\begin{scope}
		\draw (0,4) node[punkt] {} -- (1.5,2) node[punkt] {};
		\draw (0,4) node[punkt] {} -- (1.5,1) node[punkt] {};
		\draw (0,3) node[punkt] {} -- (1.5,2) node[punkt] {};
		\draw (0,3) node[punkt] {} -- (1.5,1) node[punkt] {};
		\draw (0,3) node[punkt] {} -- (1.5,0) node[punkt] {};
		\draw (0,2) node[punkt] {} -- (1.5,1) node[punkt] {};
		\draw (0,2) node[punkt] {} -- (1.5,0) node[punkt] {};
		\draw (0,1) node[punkt] {} -- (1.5,0) node[punkt] {};
	    \draw (0,1) node[punkt] {} -- (1.5,-1) node[punkt] {};
		\draw (0,0) node[punkt] {} -- (1.5,-1) node[punkt] {};
		\node [left] at (0,0) {$x_5$};
		\node [left] at (0,1) {$x_4$};
		\node [left] at (0,2) {$x_3$};
		\node [left] at (0,3) {$x_2$};
		\node [left] at (0,4) {$x_1$};
		\node [right] at (1.5,-1) {$y_6$};
		\node [right] at (1.5,0) {$y_5$};
		\node [right] at (1.5,1) {$y_4$};
		\node [right] at (1.5,2) {$y_3$};
		\end{scope}
		\end{tikzpicture}
		\caption{A strongly biconvex graph}
		\label{strongly biconvex example}
	}
\end{figure}

\begin{Remark}\label{no isolated}
	{\em Let $H=(X,Y)$ be a strongly biconvex graph which does not have any isolated vertices. Then we have $q<q'$ and $f<g$, by condition~(1) in Definition~\ref{strongly biconvex}, and moreover condition~(2) of the definition implies that $\{x_q,y_{q'}\}$ and $\{x_f,y_g\}$ are both edges of $H$. } 
\end{Remark}	
	
	For a strongly biconvex graph $H=(X,Y)$, we set 
	\[
	m(i)=\max \{q',i+1\}
	\]
	and 
	\[
	M(i)=\max \{t: \{x_i,y_t\}\in E(H)\},
	\]
	for any $i=q,\ldots, f$ where $x_i$ is not an isolated vertex of $H$.   
	
	In the next proposition, an equivalent condition for being a strongly biconvex graph is given.

	\begin{Proposition}\label{equivalent to strongly biconvex}
		Let $H=(X,Y)$ be a bipartite graph with $X=\{x_q,x_{q+1},\dots,x_f\}$ and $Y=\{y_{q'},y_{q'+1},\ldots,y_{g}\}$ which has no isolated vertices. Then $H$ is strongly biconvex if and only if the following conditions hold: 
	\begin{enumerate}
		\item [\em(a)] $N_H(x_i)=\{y_t: t\in [m(i),M(i)]\}$ for any $i=q,\ldots,f$;
		\item [\em(b)] $M(i)\leq M(j)$ for any $i,j$ with $q\leq i<j\leq f$. 
	\end{enumerate}
	\end{Proposition}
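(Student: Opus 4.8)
The plan is to prove the two implications separately, in each case translating directly between the combinatorial conditions (1)--(2) of Definition~\ref{strongly biconvex} and the interval description (a)--(b). Throughout I will lean on two elementary consequences of the definitions of $m(i)$ and $M(i)$, valid because $H$ has no isolated vertices: every neighbor $y_t$ of $x_i$ satisfies $t\le M(i)$, and $\{x_i,y_{M(i)}\}$ is an actual edge, so the maximum defining $M(i)$ is attained.

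For the forward implication I would assume $H$ is strongly biconvex and establish (a) first, by a double inclusion. The inclusion $N_H(x_i)\subseteq\{y_t:t\in[m(i),M(i)]\}$ is immediate: if $\{x_i,y_t\}\in E(H)$ then $t>i$ by condition~(1) and $t\ge q'$ since $y_t\in Y$, so $t\ge m(i)$, while $t\le M(i)$ by definition of $M(i)$. For the reverse inclusion I would take $t\in[m(i),M(i)]$; note $q'\le t\le M(i)\le g$, so $y_t\in Y$. If $t=M(i)$ there is nothing to prove, and otherwise $i<t<M(i)$, so applying condition~(2)(ii) to the edge $\{x_i,y_{M(i)}\}$ with $r=t$ yields $\{x_i,y_t\}\in E(H)$.

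To get (b), I would fix $i<j$ and argue by cases according to the position of $j$ relative to $M(i)$. If $j<M(i)$, then $i<j<M(i)$, and condition~(2)(i) applied to the edge $\{x_i,y_{M(i)}\}$ forces $\{x_j,y_{M(i)}\}\in E(H)$, whence $M(j)\ge M(i)$. If instead $j\ge M(i)$, then since $x_j$ is not isolated it has a neighbor $y_t$, and condition~(1) gives $t>j\ge M(i)$, so again $M(j)\ge M(i)$. This case split, together with the essential use of the no-isolated-vertex hypothesis in the second case, is the only step that is not a completely mechanical unwinding of definitions, and so I regard it as the crux of the argument.

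For the converse, I would assume (a) and (b). Condition~(1) is immediate from (a), since any neighbor index of $x_i$ lies in $[m(i),M(i)]$ and $m(i)\ge i+1>i$. For condition~(2), suppose $\{x_i,y_j\}\in E(H)$ and $i<r<j$. In case~(i), with $x_r\in X$, I would verify $j\in[m(r),M(r)]$: the lower bound holds because $j>r$ and $j\ge q'$ give $j\ge m(r)$, and the upper bound holds because $j\le M(i)\le M(r)$, combining $\{x_i,y_j\}\in E(H)$ with the monotonicity~(b); then (a) gives $\{x_r,y_j\}\in E(H)$. In case~(ii), with $y_r\in Y$, I would instead verify $r\in[m(i),M(i)]$: here $r>i$ and $r\ge q'$ give $r\ge m(i)$, while $r<j\le M(i)$ gives $r\le M(i)$, so (a) yields $\{x_i,y_r\}\in E(H)$. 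This establishes both conditions and hence completes the converse.
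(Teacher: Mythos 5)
Your proof is correct and follows essentially the same route as the paper's: in the forward direction both arguments apply condition~(2) of Definition~\ref{strongly biconvex} to the edge $\{x_i,y_{M(i)}\}$ (part~(i) for the monotonicity~(b), part~(ii) to fill in the interval), and the converse verifies interval membership exactly as the paper does. The only difference is cosmetic: you treat the lower endpoint of the interval uniformly, since $m(i)>i$ makes condition~(2)(ii) apply at $r=m(i)$ as well, whereas the paper establishes $\{x_i,y_{m(i)}\}\in E(H)$ separately via a case split on $q'\geq i+1$ and Remark~\ref{no isolated} --- your version is slightly leaner but not a different approach.
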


\begin{proof}
	Suppose that $H$ is a strongly biconvex graph. First we prove~(a). Let $x_i\in X$. We show that $\{x_i,y_{m(i)}\}\in E(H)$. If $q'\geq i+1$, then $\{x_i,y_{q'=m(i)}\}\in E(H)$, since $q\leq i<q'$ and since by Remark~\ref{no isolated} we have $\{x_q,y_{q'}\}\in E(H)$. If $q'< i+1$, then clearly $m(i)=i+1$. Since $x_i$ is not an isolated vertex, there exists some $j$ with $i<i+1\leq j$ such that $\{x_i,y_j\}\in E(H)$, and hence $\{x_i,y_{i+1=m(i)}\}\in E(H)$. On the other hand, by definition of $M(i)$, it is clear that $\{x_i,y_{M(i)}\}\in E(H)$. Now, let $m(i)< r <M(i)$. 
	Thus, it follows from $\{x_i,y_{M(i)}\}\in E(H)$ that $\{x_i,y_r\}\in E(H)$. Therefore, by definitions of $m(i)$ and $M(i)$ part~(a) follows. 
	
	Next we prove (b). Let $i,j\in \{q,\ldots, f\}$ with $i<j$. If $j\geq M(i)$, then the desired inequality in~(b) holds, since clearly we have $j<M(j)$. Now assume that $j<M(i)$. Since $i<j$ and $\{x_i,y_{M(i)}\}\in E(H)$, it follows that $\{x_j,y_{M(i)}\}\in E(H)$. Hence, $M(i)\leq M(j)$ by the definition of $M(j)$, as desired. 
	
	\medskip
	Conversely, suppose that the conditions (a) and (b) hold for $H$. We show that $H$ is strongly biconvex. Assume that $\{x_i,y_j\}\in E(H)$ for some $x_i\in X$ and $y_j\in Y$. Thus, $y_j\in N_H(x_i)$, and hence by~(a) we have $j\geq m(i)$. This together with the fact that $i<m(i)$ imply that $i<j$ which fulfills condition~(1) in Definition~\ref{strongly biconvex}. 
	
	Next, let $x_i\in X$ and $y_j\in Y$ be such that $\{x_i,y_j\}\in E(H)$ and let $i<r<j$. Assume that $x_r\in X$. We show that $\{x_r,y_j\} \in E(H)$. Since $j\geq r+1$, we have $j\geq m(r)$. On the other hand, $j\leq M(i)$, because $\{x_i,y_j\}\in E(H)$. Since $i<r$, by condition~(b) we get $M(i)\leq M(r)$, and hence $j\leq M(r)$. Therefore, by condition~(a) it follows that $\{x_r,y_j\} \in E(H)$.   
	
	Assume $y_r\in Y$. We show that $\{x_i,y_r\} \in E(H)$. It follows from $\{x_i,y_j\}\in E(H)$ that $j\leq M(i)$, and hence $r<M(i)$. 
	Since $r>i$, we have $r\geq m(i)$. Therefore, $m(i)\leq r < M(i)$, and hence by condition~(a) we deduce that $\{x_i,y_r\} \in E(H)$. So, condition~(2) in Definition~\ref{strongly biconvex} is also satisfied, and hence $H$ is strongly biconvex. 
\end{proof}

	
Recall that a graph $H$ is called \emph{weakly chordal} if neither $H$ nor its complementary graph $H^c$ has an induced cycle of length greater than $4$. 
It is known that any biconvex graph is weakly chordal. In the following, for the convenience of the reader we give a proof in the case of strongly biconvex graphs. 

\begin{Proposition}\label{weakly chordal}
 Any strongly biconvex graph is weakly chordal. 
\end{Proposition}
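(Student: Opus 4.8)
The plan is to treat $H$ and its complement $H^c$ separately, since weak chordality requires that neither of them contains an induced cycle of length greater than $4$. For $H$ itself I would first invoke bipartiteness: $H$ has no odd cycles at all, so its only induced cycles have even length, and it suffices to rule out induced cycles of length $\geq 6$. For $H^c$ the work turns out to be almost immediate once the clique structure is noticed, so I expect the genuine content to lie entirely in the analysis of $H$.

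For the complement I would argue as follows. In $H$ there are no edges inside $X$ nor inside $Y$, so in $H^c$ both $X$ and $Y$ induce complete subgraphs. If $C$ were an induced cycle of $H^c$ of length $\geq 5$, then at most two of its vertices could lie in $X$ and at most two in $Y$: three vertices on the same side would be pairwise adjacent in $H^c$, hence would form a triangle inside the induced subgraph on $V(C)$, which a cycle of length $\geq 4$ cannot contain. This bounds the length of $C$ by $4$, a contradiction, so $H^c$ has no induced cycle of length greater than $4$.

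For $H$, suppose toward a contradiction that $C$ is an induced cycle of length $2k$ with $k \geq 3$. The main idea is to look at the vertex of $C$ whose index is largest. I would first show, using condition~(1) of Definition~\ref{strongly biconvex}, that this extremal vertex must lie in $Y$: every $x$-vertex $x_a$ of $C$ has both of its cycle-neighbours in $Y$ with strictly larger index, since each edge $\{x_a,y_s\}$ forces $a<s$, so no $x$-vertex can attain the maximum index. Write the extremal vertex as $y_n$, let its two cycle-neighbours be $x_c$ and $x_d$, and (after possibly reversing the orientation of $C$) assume $c<d$; let $y_f$ denote the other cycle-neighbour of $x_d$. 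The crux is then to produce a chord: from $C$ we have $c<d<f$, the last inequality again from condition~(1) applied to $\{x_d,y_f\}$, while $f<n$ because $y_n$ is of maximal index and $y_f\neq y_n$. Hence $c<f<n$, and applying condition~(2)(ii) to the edge $\{x_c,y_n\}$ with $r=f$ yields $\{x_c,y_f\}\in E(H)$. Since $x_c$ and $y_f$ are non-consecutive on $C$ once the length is $\geq 6$, this edge is a genuine chord, contradicting that $C$ is induced.

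The only delicate point I anticipate is the bookkeeping around this chord: checking that $y_f$ is genuinely distinct from both cycle-neighbours of $x_c$, so that $\{x_c,y_f\}$ is not already a cycle edge, which is exactly where the hypothesis $k\geq 3$ enters, and confirming that the orientation reduction $c<d$ is harmless by symmetry. Everything else reduces to the two order conditions of the definition; in particular the argument never needs the full interval description of Proposition~\ref{equivalent to strongly biconvex}, only conditions~(1) and~(2)(ii). Combining the two cases shows that neither $H$ nor $H^c$ has an induced cycle of length greater than $4$, so $H$ is weakly chordal.
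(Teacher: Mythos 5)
Your proof is correct and follows essentially the same strategy as the paper's: assume an induced cycle of length at least $6$ in $H$, pick a vertex of extremal index on it, and use the order conditions of Definition~\ref{strongly biconvex} to manufacture a chord, then dispose of $H^c$ by bipartiteness. The only cosmetic difference is that you take the maximum-index vertex (necessarily in $Y$) and apply condition~(2)(ii), whereas the paper takes the minimum-index $X$-vertex and condition~(2)(i), which forces it into a two-case analysis that your choice avoids.
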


\begin{proof}
	Let $H=(X,Y)$ be a strongly biconvex graph, and let $C$ be an induced cycle in $H$ labeled as $x_{\alpha_1},y_{\beta_1},x_{\alpha_2},y_{\beta_2},\ldots, x_{\alpha_t},y_{\beta_t},x_{\alpha_1}$ with $t\geq 3$. We may assume that $\alpha_1<\alpha_i$ for all $i=2,\ldots,t$. If $\alpha_2<\beta_t$, then we get $\{x_{\alpha_2},y_{\beta_t}\}\in E(H)$, since $\{x_{\alpha_1},y_{\beta_t}\}\in E(H)$. This is a contradiction to the fact that $C$ is an induced cycle. So assume that $\beta_t \leq \alpha_2$. Thus, we have $\alpha_1<\alpha_t<\beta_t\leq \alpha_2<\beta_1$, where the second and the last inequalities follow because $\{x_{\alpha_t},y_{\beta_t}\}\in E(H)$ and $\{x_{\alpha_2},y_{\beta_1}\}\in E(H)$. Since $\{x_{\alpha_1},y_{\beta_1}\}$ is an edge of $H$, it follows that $\{x_{\alpha_t},y_{\beta_1}\}$ is an edge too, a contradiction to the fact that $C$ is an induced cycle. Therefore, $H$ does not have any induced cycle of length greater than~$4$. On the other hand, since $H$ is bipartite, it is clear that any induced cycle in $H^c$ has length at most~$4$. Thus, $H$ is a weakly chordal graph, as desired.  
\end{proof}

Finding a maximum matching as well as a maximum induced matching in bipartite graphs and, in particular, in  convex bipartite graphs has been an interesting problem considered by several authors, see for example \cite{DDL, SY}.

In the following theorem indeed we provide an algorithm to find a maximum induced matching for any strongly biconvex graph. This algorithm is of \emph{greedy} type. Recall that an induced matching in a graph is a set of disjoint edges whose endpoints are not adjacent to each other. Such edges are also called \emph{pairwise $3$-disjoint}. A maximum induced matching in a graph is an induced matching of the  maximum size. The size of a maximum induced matching in $H$ is called the \emph{induced matching number} and is denoted by $\inm(H)$. 

Before stating the next theorem, we fix some notation. Let $H=(X,Y)$ be a strongly biconvex graph with no isolated vertices, and let $i_1=q$ and $j_1=q'$. For any $\ell\geq 2$, we set 
\[
T^{\ell}_X=\{t: t\geq j_{\ell-1}~,~N_H(x_t)\not \subseteq N_H(x_{i_{\ell-1}}) \}. 
\] 
If $T^{\ell}_X\neq \emptyset$, then we set 
\[
i_{\ell}=\min T^{\ell}_X,
\]
\\
\[
T^{\ell}_Y=\{t: y_t\in N_H(x_{i_{\ell}})\setminus N_H(x_{i_{\ell-1}}) \}
\]  
and 
\[
j_{\ell}=\min T^{\ell}_Y.
\]
Now let $m$ be the biggest integer for which $T^{m}_X\neq \emptyset$. Then consider the following set of edges of $H$:
\[
\mathcal{M}(H)=\big \{ \{x_{i_\ell},y_{j_{\ell}}\}: \ell=1,\ldots , m \big \}.
\] 

Using the above notation, we have the following: 

\begin{Theorem}\label{induced matching}
	Let $H=(X,Y)$ be a strongly biconvex graph with no isolated vertices. Then $\MM(H)$	is a maximum induced matching for $H$.   
\end{Theorem}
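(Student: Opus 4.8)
The plan is to reduce everything to the interval description of neighborhoods provided by Proposition~\ref{equivalent to strongly biconvex}. By part~(a) each $N_H(x_i)$ is the interval $[m(i),M(i)]$, and by part~(b) the function $M$ is non-decreasing; since $m(i)=\max\{q',i+1\}$ is visibly non-decreasing as well, both endpoints of these neighborhood intervals grow with $i$. Under this dictionary the recursion defining $\MM(H)$ simplifies: for $t>i_{\ell-1}$ one has $N_H(x_t)\not\subseteq N_H(x_{i_{\ell-1}})$ if and only if $M(t)>M(i_{\ell-1})$, so $i_\ell=\min\{t\ge j_{\ell-1}: M(t)>M(i_{\ell-1})\}$, and then $j_\ell=\max\{m(i_\ell),\,M(i_{\ell-1})+1\}$. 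I will use these closed forms throughout.

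First I would verify that $\MM(H)$ is genuinely an induced matching. Since $y_{j_{\ell-1}}\in N_H(x_{i_{\ell-1}})$ forces $i_{\ell-1}<j_{\ell-1}\le i_\ell$, the sequence $(i_\ell)$ is strictly increasing; and because $y_{j_\ell}\notin N_H(x_{i_{\ell-1}})$ while $j_\ell\ge m(i_\ell)\ge m(i_{\ell-1})$, one is forced into $j_\ell>M(i_{\ell-1})\ge j_{\ell-1}$, so $(j_\ell)$ is strictly increasing too. In particular all endpoints are distinct. For pairwise $3$-disjointness I take $\ell<\ell'$ and check the two cross pairs: $j_\ell\le i_{\ell+1}\le i_{\ell'}<m(i_{\ell'})$ gives $\{x_{i_{\ell'}},y_{j_\ell}\}\notin E(H)$, while $j_{\ell'}\ge j_{\ell+1}>M(i_\ell)$ gives $\{x_{i_\ell},y_{j_{\ell'}}\}\notin E(H)$, reading each off the interval description.

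The core of the argument is maximality, which I would prove by a greedy-stays-ahead comparison. Let $N=\{\{x_{a_1},y_{b_1}\},\ldots,\{x_{a_k},y_{b_k}\}\}$ be an arbitrary induced matching, ordered so that $a_1<\cdots<a_k$. A short preliminary lemma shows that such an ordering is automatically increasing in the $Y$-coordinate as well, and that consecutive edges satisfy $a_{\ell+1}\ge b_\ell$ and $b_{\ell+1}>M(a_\ell)$: both follow by translating the non-adjacency of $x_{a_\ell}$ to $y_{b_{\ell+1}}$ and of $x_{a_{\ell+1}}$ to $y_{b_\ell}$ into the interval language, together with $q'\le b_\ell\le M(a_\ell)\le M(a_{\ell+1})$. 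These two inequalities, with the edge condition $b_\ell\in[m(a_\ell),M(a_\ell)]$, are exactly the constraints that the greedy choices were designed to meet.

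I then prove by induction on $\ell$ that, for every $\ell\le\min(m,k)$, the greedy data dominate those of $N$ in the sense that $M(i_\ell)\le M(a_\ell)$ and $j_\ell\le b_\ell$. The base case uses $i_1=q$, $j_1=q'$ together with monotonicity of $M$ and $\{x_q,y_{q'}\}\in E(H)$ from Remark~\ref{no isolated}. For the step, the inequalities $a_{\ell+1}\ge b_\ell\ge j_\ell$ and $M(a_{\ell+1})\ge b_{\ell+1}>M(a_\ell)\ge M(i_\ell)$ show that $a_{\ell+1}$ lies in the set minimized to produce $i_{\ell+1}$, whence $i_{\ell+1}\le a_{\ell+1}$ and, by monotonicity, $M(i_{\ell+1})\le M(a_{\ell+1})$; the bound $j_{\ell+1}\le b_{\ell+1}$ then follows by checking $\max\{m(i_{\ell+1}),M(i_\ell)+1\}\le b_{\ell+1}$ via $m(i_{\ell+1})\le m(a_{\ell+1})\le b_{\ell+1}$ and $M(i_\ell)<b_{\ell+1}$. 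Finally, if we had $k>m$, then applying the invariant at $\ell=m$ and repeating the step computation to the edge $\{x_{a_{m+1}},y_{b_{m+1}}\}$ would place $a_{m+1}$ in $T^{m+1}_X$, contradicting the maximality of $m$; hence $k\le m$ and $\MM(H)$ is a maximum induced matching. The main obstacle is isolating the correct invariant: one must carry along both the reach $M(i_\ell)$ and the covered coordinate $j_\ell$, since these are precisely the two quantities governing feasibility of the next edge, and verifying that the greedy rule keeps both minimal is what makes the exchange go through.
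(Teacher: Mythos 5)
Your proof is correct, and while it opens the same way as the paper's, the core maximality argument takes a genuinely different route. Like the paper, you first verify that $\MM(H)$ is an induced matching and then normalize an arbitrary induced matching so that $a_1<\cdots<a_k$, deriving $a_\ell<b_\ell\leq a_{\ell+1}$ (the paper's \eqref{ordering of alpha-beta}); from there the two proofs diverge. The paper partitions $X$ into the intervals $I_\ell=[i_\ell,i_{\ell+1})$ and $I_m=[i_m,f]$ and argues by counting: no interval can contain three of the $a_\ell$'s, $I_1$ contains at most one, an interval containing two forces the previous one to contain none, and the case $m=1$ needs a separate analysis via nested neighborhoods. You instead translate the greedy recursion into closed form using the interval description of Proposition~\ref{equivalent to strongly biconvex} (namely $i_\ell=\min\{t\geq j_{\ell-1}:M(t)>M(i_{\ell-1})\}$ and $j_\ell=\max\{m(i_\ell),M(i_{\ell-1})+1\}$) and run a greedy-stays-ahead induction with the two-part invariant $M(i_\ell)\leq M(a_\ell)$ and $j_\ell\leq b_\ell$, so that an $(m+1)$-st edge of any induced matching would witness $T^{m+1}_X\neq\emptyset$, contradicting the maximality of $m$. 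Your version buys three things: it avoids the paper's interval bookkeeping, it handles $m=1$ uniformly rather than as a special case, and your closed form for $j_\ell$ (which indeed simplifies to $M(i_{\ell-1})+1$, since $i_\ell\leq M(i_{\ell-1})$ and $q'\leq M(i_{\ell-1})$) is precisely the content of Remark~\ref{linear time}, so Corollary~\ref{o(n)} comes for free as a byproduct. What the paper's argument buys in exchange is that it works directly from Definition~\ref{strongly biconvex} without routing everything through Proposition~\ref{equivalent to strongly biconvex}; your dictionary is available only because the no-isolated-vertices hypothesis makes that proposition applicable, a dependence worth flagging explicitly at the start of your write-up.
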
 	

\begin{proof}
Let $\MM=\MM(H)$. Note that by Definition~\ref{strongly biconvex} and the choice of $i_{\ell}$, we have 
\begin{equation}\label{label ordering}
i_{\ell-1}<j_{\ell-1} \leq i_{\ell}<j_{\ell}
\end{equation} 	
for any $\ell=2,\ldots,m$. 

First we show that $\mathcal{M}$ is an induced matching of $H$. Let $\ell=2,\ldots,m$. Then by the choice of $i_{\ell}$ and $j_{\ell}$, it is clear that $\{x_{i_{\ell}},y_{j_{\ell-1}}\}\notin E(H)$ and $\{x_{i_{\ell-1}},y_{j_{\ell}}\}\notin E(H)$. Now, let $t<\ell-1$. By the structure of $H$, it is clear that $\{x_{i_{\ell}},y_{j_t}\}\notin E(H)$, since $\ell >t$. If $\{x_{i_t},y_{j_{\ell}}\}\in E(H)$, then by definition of a strongly biconvex graph, it follows that $\{x_{i_{\ell-1}},y_{j_{\ell}}\}\in E(H)$, a contradiction. Therefore $\mathcal{M}$ is an induced matching of size $m$ for $H$. 

Next we show that $\mathcal{M}$ is a maximum induced matching for $H$. For this, suppose that 
\[
\mathcal{M}'=\big \{ \{x_{{\alpha}_i},y_{{\beta}_i}\}: i=1,\ldots,r \big\}
\]    
is an induced matching of size $r$ for $H$. Then, it is enough to show that $r\leq m$. We may assume that $\alpha_1<\cdots < \alpha_r$. For any $i=1,\ldots,r-1$, we have $\beta_i\leq \alpha_{i+1}$. Otherwise,  $\alpha_{i+1}< \beta_i$ together with $\alpha_{i}<\alpha_{i+1}$ implies that $\{x_{\alpha_{i+1}}, y_{\beta_i}\}\in H$, since $\{x_{\alpha_{i}}, y_{\beta_i}\}\in H$. This is a contradiction to the fact that $\mathcal{M}'$ is an induced matching. Therefore, for any $i=1,\ldots,r-1$, we have
\begin{equation}\label{ordering of alpha-beta}
\alpha_i < \beta_i \leq \alpha_{i+1}.
\end{equation}
  
Let $I_{\ell}=[i_{\ell},i_{\ell+1})$ for $\ell=1,\ldots,m-1$, and let $I_m=[i_m,f]$. If $m=1$, then we only have one interval $I_1=[q,f]$. In this case we show that $\inm(H)=1$, and hence $r=m=1$. First note that by the structure of $H$ we have $\{x_f,y_g\}\in E(H)$, since $H$ does not have any isolated vertices. Now, we distinguish two cases: 

(i)~Suppose that $f<q'$. Then $N_H(x_f)=\{y_s: s\in [q',g]\}$, since $\{x_f,y_g\}\in E(H)$ and $f<q'\leq s\leq g$. On the other hand, $N_H(x_t)=\{y_s: s\in [q',M(t)]\}$ for all $t=q,\ldots, f-1$. Thus, we have $N_H(x_q)\subseteq N_H(x_{q+1})\subseteq \cdots \subseteq N_H(x_f)$ which implies that there are no two $3$-disjoint edges in $H$, and hence $\inm(H)=1$.  

(ii)~Suppose that $f\geq q'$. If $\{x_q,y_g\}\notin E(H)$, then $y_g\in N_H(x_f)\setminus N_H(x_q)$. So, $T^{2}_X\neq \emptyset$ and hence $m\geq 2$, a contradiction. Thus, $\{x_q,y_g\}\in E(H)$, and hence $N_H(x_q)=\{y_s: s\in [q',g]\}$. Since $\{x_q,y_g\}\in E(H)$, we have $\{x_t,y_g\}\in E(H)$ for any $t$ with $q<t<f<g$. Therefore, $N_H(x_t)=\{y_s: s\in [m(t),g]\}$ where for $t<q'$, $m(t)=q'$ while for $t\geq q'$, $m(t)=t+1>q'$. This implies that $N_H(x_f)\subseteq \cdots \subseteq N_H(x_{q+1})\subseteq N_H(x_q)$, and hence there do not exist any two $3$-disjoint edges in $H$, namely $\inm(H)=1$.  

Now assume that $m\geq 2$. Suppose that $I_{\ell}$, for some $\ell=1,\ldots,m-1$, contains at least two of $\alpha_i$'s, say $\alpha_t$ and $\alpha_{t+1}$. In the following, we show that $j_{\ell}=i_{\ell+1}$. 

Note that we have 
\[
i_{\ell}\leq \alpha_t<\beta_t\leq \alpha_{t+1}<\beta_{t+1}.
\]
So, if $\{x_{i_{\ell}},y_{\beta_{t+1}}\}\in E(H)$, then $\{x_{\alpha_t},y_{\beta_{t+1}}\}\in E(H)$, a contradiction, since $\MM'$ is an induced matching. Therefore, 
\begin{equation}\label{beta t+1}
\{x_{i_{\ell}},y_{\beta_{t+1}}\}\notin E(H).
\end{equation}
Thus, it follows that 
\begin{equation}\label{alpha t+1}
\alpha_{t+1}<j_{\ell}
\end{equation}
by the choice of $i_{\ell+1}$, since $\alpha_{t+1}<i_{\ell+1}$. 

If $\beta_{t+1}<j_{\ell}$, then we have 
$\{x_{i_{\ell}},y_{\beta_{t+1}}\}\in E(H)$, because 
$\beta_{t+1}\geq q'$, $i_{\ell}<\beta_{t+1}$ and $\{x_{i_{\ell}},y_{j_{\ell}}\}\in E(H)$. But this is a contradiction to \eqref{beta t+1}, and hence we have $j_{\ell}<\beta_{t+1}$, since clearly $j_{\ell}\neq \beta_{t+1}$. The latter inequality together with \eqref{alpha t+1} implies that 
\begin{equation}\label{j ell and beta t+1}
\{x_{j_{\ell}},y_{\beta_{t+1}}\}\in E(H),
\end{equation}
since $\{x_{\alpha_{t+1}},y_{\beta_{t+1}}\}\in E(H)$. 
By the choice of $i_{\ell+1}$ and by \eqref{beta t+1} and \eqref{j ell and beta t+1}, we get $j_{\ell}\geq i_{\ell+1}$. So, \eqref{label ordering} implies that $j_{\ell}=i_{\ell+1}$, as desired. In particular, it follows that $\ell\geq 2$. Indeed, if $\ell=1$, then we have $i_2=j_1=q'$, and hence $q\leq \alpha_1<\beta_1 \leq \alpha_2< q'$, a contradiction, since $q'$ is the smallest index for the elements of $Y$.  

Note that if $\alpha_r\geq j_m$, then $f\geq j_m$ and $\alpha_r\in I_m$. In this case, we show that $\alpha_t\notin I_m$ for any $t<r$. By our ordering, it is enough to show that $\alpha_{r-1}\notin I_m$. Suppose on contrary that $\alpha_{r-1}\in I_m$. Then we have 
\begin{equation}\label{last interval}
i_m\leq \alpha_{r-1}<\beta_{r-1}\leq \alpha_r<\beta_r,
\end{equation}
by \eqref{ordering of alpha-beta}. If $\{x_{i_m},y_{\beta_r}\}\notin E(H)$, then by definition of $\MM$, one could add $\{x_{\alpha_r},y_s\}$, for some $s\leq \beta_r$, to $\MM$, a contradiction. So, $\{x_{i_m},y_{\beta_r}\}\in E(H)$, which implies together with \eqref{last interval} that $\{x_{\alpha_{r-1}},y_{\beta_r}\}\in E(H)$. The latter is a contradiction to the fact that $\MM'$ is an induced matching, and hence we have $\alpha_{r-1}\notin I_m$. 

Next we show that none of $I_1,\ldots,I_m$ can contain three of $\alpha_i$'s. Assume that $\alpha_{t-1},\alpha_t,\alpha_{t+1}\in I_{\ell}$ for some $t=2,\ldots,r-1$ and $\ell=1,\ldots,m$. In the particular case of $\ell=m$, we have $t=r-1$ and $\alpha_r<j_m$. This combined with \eqref{ordering of alpha-beta} and \eqref{alpha t+1} implies that  
\[
i_{\ell}\leq \alpha_{t-1}<\beta_{t-1}\leq \alpha_t<\beta_t\leq \alpha_{t+1}<j_{\ell}.
\]      
Since $\{ x_{i_{\ell}},y_{j_{\ell}} \}\in E(H)$, it follows that 
$\{ x_{\alpha_{t-1}},y_{j_{\ell}} \}\in E(H)$,
and hence $\{x_{\alpha_{t-1}},y_{\beta_t}\}\in E(H)$, a contradiction.

Therefore, we have already shown that $I_1$ contains at most one of $\alpha_i$'s and any of $I_2,\ldots,I_m$ contains at most two of $\alpha_i$'s. Finally, we show that if $I_{\ell}$ contains two of $\alpha_i$'s for some $\ell=2,\ldots,m$, then $I_{\ell-1}$ contains none of them. This then shows that $r\leq m$ and completes the proof. Let $\alpha_t,\alpha_{t+1}\in I_{\ell}$. If $\alpha_{t-1}\in I_{\ell-1}$, then by \eqref{ordering of alpha-beta} we have 
\begin{equation}\label{finally}
i_{\ell-1}\leq \alpha_{t-1}<\beta_{t-1}\leq \alpha_t<\beta_t.
\end{equation}   
On the other hand, by \eqref{ordering of alpha-beta} and \eqref{alpha t+1}, we have 
\[
i_{\ell}\leq \alpha_t<\beta_t\leq \alpha_{t+1}<j_{\ell},
\]
(here, $t+1$ could be also $r$ by our assumptions on $I_{\ell}$). 
Thus  $\{x_{i_{\ell}},y_{\beta_t}\}\in E(H)$, since $\{x_{i_{\ell}},y_{j_{\ell}}\}\in E(H)$. As $\beta_t<j_{\ell}$, it follows from the choice of $j_{\ell}$ that $\{x_{i_{\ell-1}},y_{\beta_t}\}\in E(H)$. Combining this with \eqref{finally}, we get $\{x_{\alpha_{t-1}},y_{\beta_t}\}\in E(H)$ which is a contradiction, since $\MM'$ is an induced matching for $H$. Therefore, $\alpha_{t-1}\notin I_{\ell-1}$. Our ordering on $\alpha_i$'s, yields that none of $\alpha_i$'s belongs to $I_{\ell-1}$, as desired.        
\end{proof}

\begin{Remark}\label{linear time}
{\em According to the notation of Theorem~\ref{induced matching}, we would like to remark that one could observe that 
	\begin{equation}\label{j_ell}
    j_{\ell}=M(i_{\ell-1})+1
	\end{equation} 
for any $\ell=2,\ldots,m$. Indeed, by the choice of $j_{\ell}$ and Proposition~\ref{equivalent to strongly biconvex}, we have $j_{\ell}\notin [m(i_{\ell-1}),M(i_{\ell-1})]$. This implies that $j_{\ell}\geq M(i_{\ell-1})+1$, since clearly we have $j_{\ell}\geq m(i_{\ell})\geq m(i_{\ell-1})$. On the other hand, by the choice of $i_{\ell}$, it follows that $i_{\ell}\leq M(i_{\ell-1})$, since none of the neighbors of $x_{M(i_{\ell-1})}$ is adjacent to $x_{i_{\ell-1}}$.  So, we have $i_{\ell} < M(i_{\ell-1})+1\leq M(i_{\ell})$, where the last inequality follows from the fact that $x_{i_{\ell}}$ has a neighbor which is not a neighbor of $x_{i_{\ell-1}}$. Therefore, $\{x_{i_{\ell}},y_{M(i_{\ell-1})+1}\}\in E(H)$. Then it follows that $j_{\ell}\leq M(i_{\ell-1})+1$, because $y_{M(i_{\ell-1})+1}$ is clearly not adjacent to $x_{i_{\ell-1}}$. }
\end{Remark}

Given a labeled strongly biconvex graph $H=(X,Y)$ and having $M(i)$'s for all $i$, the observation \eqref{j_ell} in Remark~\ref{linear time} implies that a maximum induced matching in $H$ can be found in a linear time, namely $O(|X|)$. So, we have the following corollary:
  

\begin{Corollary}\label{o(n)}
	A maximum induced matching in a (labeled) strongly biconvex graph can be computed in a linear time.  
\end{Corollary}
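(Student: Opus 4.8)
The plan is to lean entirely on Theorem~\ref{induced matching}: since that result already guarantees $\MM(H)$ is a maximum induced matching, all that remains is to argue that the edges $\{x_{i_\ell},y_{j_\ell}\}$, $\ell=1,\dots,m$, can be generated in time $O(|X|)$. I would first reduce to the hypothesis of that theorem by noting that isolated vertices contribute nothing to an induced matching and can be detected and discarded in a single pass (an $x_i$ is isolated exactly when it has no recorded neighbor, i.e. $M(i)$ is undefined); so I may assume $H$ has no isolated vertices and apply the theorem directly.

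Next I would rewrite the defining recursion in terms of the precomputed values $M(\cdot)$. For the $Y$-side this is immediate: equation \eqref{j_ell} of Remark~\ref{linear time} gives $j_\ell=M(i_{\ell-1})+1$, a constant-time update once $i_{\ell-1}$ is known. For the $X$-side I would invoke Proposition~\ref{equivalent to strongly biconvex}: because each $N_H(x_t)$ is the interval $\{y_s:s\in[m(t),M(t)]\}$ and $m$ is non-decreasing, for any $t\geq j_{\ell-1}>i_{\ell-1}$ the inclusion $N_H(x_t)\subseteq N_H(x_{i_{\ell-1}})$ fails precisely when $M(t)>M(i_{\ell-1})$. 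Hence
\[
i_\ell=\min\{t\geq j_{\ell-1}: M(t)>M(i_{\ell-1})\},
\]
which I would locate by scanning $t=j_{\ell-1},j_{\ell-1}+1,\dots$ until $M(t)$ first exceeds $M(i_{\ell-1})$; monotonicity of $M$ (condition~(b) of the proposition) guarantees that the first such index is the correct one, and the scan declares $T^\ell_X=\emptyset$ and halts if no such $t\leq f$ exists.

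The hard part is the amortized analysis showing these scans cost $O(|X|)$ in total, and here I would use the orderings already established. Equation \eqref{label ordering} gives $j_{\ell-1}\leq i_\ell$, while the computation inside Remark~\ref{linear time} shows $i_\ell<M(i_{\ell-1})+1=j_\ell$. Thus the search window $[j_{\ell-1},i_\ell]$ used to produce $i_\ell$ ends strictly below the starting point $j_\ell$ of the next search, so the successive windows are pairwise disjoint subintervals of $\{q,\dots,f\}$ and the scanning pointer never backtracks. Summing their lengths yields at most $f-q+1=|X|$ evaluations of $M(\cdot)$ across all $\ell$; combined with the constant-time computation of each $j_\ell$ and the linear preprocessing (isolated-vertex removal and reading the $M(i)$'s), this gives the desired $O(|X|)$ bound. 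The only genuine subtlety is this no-backtracking property of the search; everything else is routine bookkeeping.
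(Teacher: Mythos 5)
Your proposal is correct and takes essentially the same route as the paper, which deduces Corollary~\ref{o(n)} directly from the formula $j_{\ell}=M(i_{\ell-1})+1$ of Remark~\ref{linear time}, assuming the $M(i)$'s are given for the labeled graph. The extra details you supply --- the characterization $i_{\ell}=\min\{t\geq j_{\ell-1}: M(t)>M(i_{\ell-1})\}$ via Proposition~\ref{equivalent to strongly biconvex} and the disjoint-windows amortization using $i_{\ell}\leq M(i_{\ell-1})<j_{\ell}$ --- are sound and simply make explicit the bookkeeping that the paper leaves implicit.
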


	

    \section{Strongly disjoint families of complete bipartite subgraphs in strongly biconvex graphs}\label{3-disjoint set of complete bipartite subgraphs}
     
     In this section, we investigate about the properties of strongly disjoint families of complete bipartite subgraphs (in the sense of \cite{K}) of a strongly biconvex graph. The results of this section enables us to give an affirmative answer to \cite[Question~1]{AH} in the next section. 
     
     First we recall some definitions and fix some notation. Let $G$ be a graph. The family $\MB=\{B_1,\ldots,B_r\}$ of complete bipartite subgraphs of $G$ is called \emph{strongly disjoint} if the following conditions hold:
     \begin{enumerate}
     	\item $V(B_k)\cap V(B_{\ell})= \emptyset$ for all $k\neq \ell$;
     	\item for each $i=1,\ldots,r$, there exists $e_i\in E(B_i)$ such that $\{e_1,\ldots,e_r\}$ is an induced matching for $G$. 
     \end{enumerate} 
 
 Given a strongly disjoint family $\MB$ of complete bipartite subgraphs of $G$, we set 
 \[
 V(\MB)=\cup_{i=1}^r V(B_i)
 \] 
 and
 \[
 d(\MB)=\sum_{i=1}^{r}|V(B_i)| - r.
 \]
 We also set $\MS(G)$ to be the set of all strongly disjoint families of complete bipartite subgraphs of $G$, and  
\[
d(G)=\max \{d(\MB): \MB \in \MS(G)\}.
\]  

Now, let $H=(X,Y)$ be a strongly biconvex graph with no isolated vertex. For any $\MB=\{B_1,\ldots,B_r\}\in \MS(H)$, we set $X(B_i)=V(B_i)\cap X$ and $Y(B_i)=V(B_i)\cap Y$ for any $i=1,\ldots,r$. We also let $m(B_i)$ and $M(B_i)$ be the minimum and the maximum index of a vertex in $X(B_i)$ for any $i$, respectively. Also, we set $m'(B_i)$ and $M'(B_i)$ to be the minimum and the maximum index of a vertex in $Y(B_i)$ for any $i$, respectively.

For any subset $T$ of the vertices of a graph $G$, we denote the induced subgraph of $G$ on $V(G)\setminus T$ by $G-T$. In particular, if $T$ consists of only one vertex $v$, then we simply write $G-v$.


\begin{Lemma}\label{ordered families}
	Let $H=(X,Y)$ be a strongly biconvex graph with no isolated vertex, and let $\MB=\{B_1,\ldots,B_r\}\in \MS(H)$. Then there exists $\MB'=\{B'_1,\ldots,B'_r\}\in \MS(H)$ with the following properties: 
	\begin{enumerate}
		\item [\em(a)] $M(B'_i)<m(B'_j)$ and $M'(B'_i)<m'(B'_j)$ for any $i<j$;
		\item [\em(b)] $X(B'_i)$ and $Y(B'_i)$ are indexed by some intervals for all $i=1,\ldots,r$;
		\item [\em(c)] $d(\MB)\leq d(\MB')$.
	\end{enumerate} 
\end{Lemma}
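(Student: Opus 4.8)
The plan is to manufacture $\MB'$ from $\MB$ in two movements: first enlarge every block so that its two sides are indexed by intervals, which can only raise $d$; then rearrange the enlarged blocks so their index ranges become ordered and pairwise separated, which is where the real work lies. For the first movement I would fix, for each $i$, an edge $e_i\in E(B_i)$ with $\{e_1,\dots,e_r\}$ an induced matching, and replace $X(B_i)$ by $\{x_p:m(B_i)\le p\le M(B_i)\}$ and $Y(B_i)$ by $\{y_p:m'(B_i)\le p\le M'(B_i)\}$. Using conditions~(2)(i) and~(2)(ii) of Definition~\ref{strongly biconvex}, one checks that the enlarged $B_i$ stays complete bipartite: an inserted $x_p$ has index strictly between two vertices of $X(B_i)$, and every vertex of $Y(B_i)$ has index exceeding $M(B_i)\ge p$ by condition~(1), so condition~(2)(i) makes $x_p$ adjacent to all of $Y(B_i)$; dually for an inserted $y_p$. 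This yields property~(b) for each block and, since vertices are only added, $|V(B_i)|$ does not shrink. A clean structural fact I would record here is that applying condition~(1) to the corner edge $\{x_{M(B_i)},y_{m'(B_i)}\}$ forces $M(B_i)<m'(B_i)$: the whole $X$-range of a block lies, in index, strictly below its whole $Y$-range.

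Next I would order the blocks. Writing $e_i=\{x_{\alpha_i},y_{\beta_i}\}$ and arguing exactly as in the proof of Theorem~\ref{induced matching}, I may assume $\alpha_1<\cdots<\alpha_r$ and $\alpha_i<\beta_i\le\alpha_{i+1}$ for all $i$. Since $m(B_i)\le\alpha_i\le M(B_i)<m'(B_i)\le\beta_i\le M'(B_i)$, the matching edges are already forced into consecutive positions; the goal of~(a) is to upgrade this to $M(B_i)<m(B_j)$ and $M'(B_i)<m'(B_j)$ for all $i<j$. It suffices to treat consecutive blocks, so I would set up an induction that removes overlaps one adjacent pair at a time, measuring progress by the number of pairs $i<j$ whose $X$-ranges or $Y$-ranges intersect.

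The heart of the argument — and the step I expect to be the main obstacle — is uncrossing one overlapping adjacent pair $B_i,B_{i+1}$ while keeping the family genuinely vertex-disjoint, keeping $r$ fixed, and \emph{retaining every original vertex}, so that $\sum_k|V(B_k)|$ never drops below its value for $\MB$ and~(c) is secured. Suppose the $X$-ranges overlap, say $m(B_i)<m(B_{i+1})\le M(B_i)$. By the fill-in adjacency above, any $X$-vertex with index in the overlap is adjacent to all of $Y(B_i)$, and by the same computation inside $B_{i+1}$ it is adjacent to all of $Y(B_{i+1})$; here the monotonicity $M(i)\le M(j)$ of Proposition~\ref{equivalent to strongly biconvex}, together with its analogue for the vertices of $Y$, is exactly what forces a reassigned vertex to remain adjacent to the \emph{entire} opposite part of its new block. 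I would then push all overlap vertices into one of the two blocks and shift the chosen edges accordingly, so the two $X$-ranges become consecutive and both blocks stay complete bipartite; the matching move on the $Y$-side separates the $Y$-ranges. The delicate point, which genuinely needs the strongly biconvex inequalities rather than mere biconvexity, is that no vertex is ever orphaned in this process — every vertex remains adjacent across a full opposite part of some block — so disjointness is restored without discarding vertices and $\{e_1,\dots,e_r\}$ (after possibly re-choosing $e_i,e_{i+1}$ inside the modified blocks) is still an induced matching. Iterating over adjacent pairs drives the number of overlapping pairs to zero and produces $\MB'$ satisfying~(a), (b) and~(c).
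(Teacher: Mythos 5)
Your plan is genuinely different from the paper's proof, and its preliminary steps are sound: the fill-in of each block to intervals via conditions (2)(i)--(ii) of Definition~\ref{strongly biconvex}, the corner observation $M(B_i)<m'(B_i)$, and the normalization $\alpha_i<\beta_i\le\alpha_{i+1}$ borrowed from the proof of Theorem~\ref{induced matching} are all correct. (For contrast, the paper never fills in all blocks at once: it fills in only the block $B_1$ containing the globally minimal $X$-index, absorbing into it any vertices of the other blocks that lie in its two intervals, shows no other block has a $Y$-vertex below $m'(B_1)$, then deletes all $x_i$ with $i\le M(B_1)$ and $y_j$ with $j\le M'(B_1)$ and inducts on $r$ in the smaller strongly biconvex graph; this sidesteps simultaneous overlaps entirely.)

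The gap sits exactly where you flag it: the uncrossing is never actually performed, and as described it is not yet a proof. First, ``push all overlap vertices into one of the two blocks'' leaves both $X$-sides indexed by intervals only if the overlap $[m(B_{i+1}),M(B_i)]$ is a common end-segment of the two ranges; if one range were nested in the other (say $m(B_{i+1})<m(B_i)$), either push leaves a disconnected index set and destroys property (b). Ruling out nesting requires cross-block inequalities such as $\alpha_i<m(B_j)$ for $j>i$ and $M'(B_i)<\beta_{i+1}$, which do \emph{not} follow from your within-block chain $m(B_i)\le\alpha_i\le M(B_i)<m'(B_i)\le\beta_i\le M'(B_i)$; each needs its own $3$-disjointness argument (e.g.\ if $x_s\in X(B_j)$ with $s<\alpha_i$, then $\{x_s,y_{\beta_j}\}\in E(H)$ and condition (2)(i) force $\{x_{\alpha_i},y_{\beta_j}\}\in E(H)$, contradicting the induced matching; similarly $\beta_{i+1}\le M'(B_i)$ would force $\{x_{\alpha_i},y_{\beta_{i+1}}\}\in E(H)$ via (2)(ii)). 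Second, you never verify that the modified family is still in $\MS(H)$: ``after possibly re-choosing $e_i,e_{i+1}$'' is not innocent, since re-chosen edges must be checked to be $3$-disjoint from \emph{all} the other $e_k$'s. In fact the same cross-block inequalities show that no matching endpoint $x_{\alpha_k}$ or $y_{\beta_k}$ ever lies in an overlap, so the original matching survives untouched and no re-choosing is needed --- but that is an argument your proposal lacks, not one it contains. Third, termination: you must check that a push never extends a range (true, again because the overlap is a common end-segment), otherwise the count of overlapping pairs need not decrease. All three points are provable, so your route can be completed; but as written, the step you yourself call the heart of the argument consists of a plan plus the assertion that ``no vertex is ever orphaned,'' rather than a proof.
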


\begin{proof}
 We may assume that the vertex with minimum index among the vertices of $X(B_i)$'s is $x_{m(B_1)}$. Let $m(B_1)<k< M(B_1)$. Then $x_k$ is adjacent to all the vertices in $Y(B_1)$, since $H$ is strongly biconvex. We add all such $x_k$'s to $X(B_1)$ and obtain a subset of $X$ which is clearly indexed by the interval $[m(B_1),M(B_1)]$ and we denote it by $X'_1$. Note that $x_k$'s might be among the vertices of $B_i$'s or not. Similarly, we can add all $y_k$'s with $k\in [m'(B_1),M'(B_1)]$ to $Y(B_1)$ to obtain a subset $Y'_1$ of vertices which is indexed by an interval. Therefore, we gain a desired complete bipartite subgraph $B'_1$ of $H$ with $X(B'_1)=X'_1$ and $Y(B'_1)=Y'_1$. Note that $y_{m'(B_1)}$ has the minimum index among the vertices of $Y(B'_1),Y(B_2),\ldots, Y(B_r)$. Indeed, if $y_j\in Y(B_{\ell})$ for some $j<m'(B_1)$ and $\ell>1$, then any vertex $x_i$ from $X(B_{\ell})$, which are now all indexed bigger than $M(B_1)$, is adjacent to $y_k$ for all $k\in [m'(B_1),M'(B_1)]$. This is then a contradiction, because of the existence an induced matching of size $r$.      
  We denote the remaining subgraphs of the complete bipartite graphs $B_2,\ldots,B_r$, by $\tilde{B_2},\ldots, \tilde{B_r}$. The graph 
 $H'=H-\{x_i,y_j:i\leq M(B_1), j\leq M'(B_1)\}$ is obviously a strongly biconvex graph. Then, it follows that $\tilde{\MB}=\{\tilde{B_2},\ldots,\tilde{B_r}\}\in \MS (H')$. Note that by the above procedure, we still remain with exactly $r$ complete bipartite graphs, since $\MB$ admits an induced matching of size $r$. Therefore, we have 
 \begin{equation}\label{d(B)}
 d(\MB)\leq d(\tilde{\MB}\cup \{B'_1\}).
 \end{equation} 
 Finally, induction on $r$ implies that there exists $\{B'_2,\ldots,B'_r\}\in \MS(H')$ with conditions (a), (b) and (c) in comparison with $\tilde{\MB}$. We let $\MB'=\{B'_1,B'_2,\ldots,B'_r\}$ which clearly belongs to $\MS(H)$. By our procedure, it is also clear that $M(B'_1)<m(B'_j)$ and $M'(B'_1)<m'(B'_j)$ for any $j>1$. Moreover, we have $d(\MB)\leq d(\MB')$ by \eqref{d(B)} and the induction hypothesis. Hence, $\MB'$ is  inductively constructed. 
\end{proof}

\begin{Lemma}\label{top matching}
  Let $H=(X,Y)$ be a strongly biconvex graph with no isolated vertex, and let $\MB=\{B_1,\ldots,B_r\} \in \MS(H)$ which satisfies conditions~{\em(}a{\em)} and~{\em(}b{\em)} in Lemma~\ref{ordered families}. Then there exists $\MB'=\{B'_1,\ldots,B'_r\} \in \MS(H)$  for which the set of edges 
  \[
  \MT=\big{\{} \{x_{m(B'_i)},y_{m'(B'_i)}\} : i=1,\ldots,r \big{\}}
  \] 
  is an induced matching of $H$ and $d(\MB)\leq d(\MB')$.
\end{Lemma}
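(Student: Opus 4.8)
The plan is to start from a family $\MB=\{B_1,\dots,B_r\}$ satisfying conditions (a) and (b) of Lemma~\ref{ordered families}, so that each $X(B_i)$ and $Y(B_i)$ is indexed by an interval, and the $X$-intervals as well as the $Y$-intervals are strictly separated and identically ordered: $M(B_i)<m(B_{i+1})$ and $M'(B_i)<m'(B_{i+1})$. The goal is to exhibit a new family $\MB'$ with the same cardinality $r$, the same (or larger) value of $d$, and with the special property that the \emph{corner edges} $\{x_{m(B'_i)},y_{m'(B'_i)}\}$ — joining the smallest-indexed $X$-vertex to the smallest-indexed $Y$-vertex of each block — form an induced matching. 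Note first that each corner edge is genuinely an edge: since $x_{m(B'_i)}$ is adjacent to all of $Y(B'_i)\ni y_{m'(B'_i)}$, we have $\{x_{m(B'_i)},y_{m'(B'_i)}\}\in E(H)$. So the only thing to arrange is the induced (pairwise $3$-disjoint) condition.

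The main point is that, with the vertex sets already separated and in matching order, the obstruction to $\MT$ being induced is a \emph{cross edge} between different blocks. Concretely, for $i<j$ the four possible cross edges between the two corner edges are $\{x_{m(B'_i)},y_{m'(B'_j)}\}$, $\{x_{m(B'_j)},y_{m'(B'_i)}\}$ and the two edges already lying inside the blocks. Because $m'(B'_j)>M'(B'_i)\geq m'(B'_i)$ and the labeling satisfies condition~(1) of Definition~\ref{strongly biconvex} together with the convexity in Proposition~\ref{equivalent to strongly biconvex}, one checks that $\{x_{m(B'_j)},y_{m'(B'_i)}\}\notin E(H)$ is automatic: an $X$-vertex of a later block has larger index than every $Y$-vertex of an earlier block, so $i<j$ forces the index condition $x<y$ to fail. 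Thus the only dangerous cross edge is $\{x_{m(B'_i)},y_{m'(B'_j)}\}$ with $i<j$, i.e.\ the edge from the lowest $X$-corner of an earlier block to the lowest $Y$-corner of a later block. The plan is to remove these obstructions one block at a time, from the bottom up.

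The key step I would carry out is a \textbf{leftward shrinking} of the lower $X$-corner of each block so as to kill every such cross edge while preserving $d$. Fix the smallest $i$ for which $\{x_{m(B'_i)},y_{m'(B'_j)}\}\in E(H)$ for some $j>i$. Since $x_{m(B_i)}$ is adjacent to $y_{m'(B_j)}$ and $m(B_i)<M(B_j)$, strong biconvexity (Proposition~\ref{equivalent to strongly biconvex}(b), $M(m(B_i))\le M(M(B_j))$, and convexity of neighborhoods) shows that in fact $x_{m(B_i)}$ is adjacent to the \emph{entire} interval $Y(B_j)$, hence $Y(B_i)\cup Y(B_j)\subseteq N_H(x_{m(B_i)})$; more importantly, one can show that the lowest $Y$-vertex $y_{m'(B_j)}$ of the later block is reachable, which means the whole block $B_j$ can be \emph{fused into} $B_i$ or, preferably, the corner vertex of $B_i$ can be replaced by a higher-indexed $X$-vertex of $B_i$ (one still inside the interval $[m(B_i),M(B_i)]$) whose neighborhood no longer reaches $y_{m'(B_j)}$. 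Using condition~(b) of Lemma~\ref{ordered families} that $X(B_i)$ is a full interval, there is freedom to pick as the new corner of $B_i$ the smallest index $x_k\in X(B_i)$ with $k>m'(B_j)$ is impossible inside the interval unless $M(B_i)$ itself fails; the honest move is instead to transfer low $X$-vertices of $B_i$ that are "too connected" into the predecessor-free region, keeping $\sum |V(B'_i)|-r$ unchanged. I would formalize this as: replace $\MB'$ by the family whose blocks have the \emph{same} $Y$-parts and whose $X$-parts are re-chosen so that the new $m(B'_i)$ exceeds $m'(B'_{i-1})$ for the relevant indices, and verify that the total vertex count and the induced-matching size are preserved, giving $d(\MB)\le d(\MB')$.

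The hard part will be showing that this re-selection of corner vertices can always be done \emph{without decreasing} $d(\MB')$ and \emph{without destroying} the induced matching inside the blocks — in other words, that shrinking one block to remove a cross edge does not create a new cross edge with a third block or lose a vertex that was contributing to $d$. I expect the cleanest way to control this is to argue by the same downward induction used in Lemma~\ref{ordered families}: peel off $B'_1$ once its corner edge is $3$-disjoint from all the others, pass to the strongly biconvex graph $H'=H-\{x_i,y_j: i\le M(B'_1),\, j\le M'(B'_1)\}$, and apply the induction hypothesis to $\{B_2,\dots,B_r\}\in\MS(H')$. The subtlety is guaranteeing that after deleting the bottom strip for $B'_1$, the remaining family still satisfies hypotheses~(a) and~(b) in $H'$ so that the induction applies; this should follow from the interval structure, exactly as in the proof of Lemma~\ref{ordered families}, but it is the step that requires the most careful bookkeeping of indices.
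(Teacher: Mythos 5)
Your reduction of the problem to eliminating cross edges between the four corner vertices is fine, but the argument breaks at two specific points. First, the claim that $\{x_{m(B'_j)},y_{m'(B'_i)}\}\notin E(H)$ for $i<j$ is ``automatic'' is false: conditions (a) and (b) of Lemma~\ref{ordered families} only compare $X$-indices with $X$-indices and $Y$-indices with $Y$-indices, and never force $m(B'_j)>M'(B'_i)$. Concretely, let $H$ be the strongly biconvex graph with $X=\{x_1,\dots,x_4\}$, $Y=\{y_3,\dots,y_6\}$, $N_H(x_1)=\{y_3\}$, $N_H(x_2)=\{y_3,\dots,y_6\}$, $N_H(x_3)=\{y_4,y_5,y_6\}$, $N_H(x_4)=\{y_5,y_6\}$, and take $B_1$ on $\{x_1,y_3\}$ and $B_2$ on $\{x_2,x_3,x_4,y_5,y_6\}$ with the induced matching $\{x_1,y_3\},\{x_3,y_5\}$. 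This family lies in $\MS(H)$ and satisfies (a) and (b), yet $\{x_{m(B_2)},y_{m'(B_1)}\}=\{x_2,y_3\}$ is an edge, so the corner edges are not an induced matching; this is exactly the type of obstruction your proposal declares impossible and then never treats.

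Second, for the obstruction you do treat, the proposed repair provably cannot work. If $\{x_{m(B_i)},y_{m'(B_j)}\}\in E(H)$ with $i<j$, then every vertex of $X(B_i)$ has index at most $M(B_i)<m'(B_i)\le M'(B_i)<m'(B_j)$, so condition (2)(i) of Definition~\ref{strongly biconvex} makes \emph{every} vertex of $X(B_i)$ adjacent to $y_{m'(B_j)}$; hence no re-selection of the corner inside $X(B_i)$ removes the cross edge. Your text half-acknowledges this, but the fallback (``transfer low $X$-vertices of $B_i$ \dots into the predecessor-free region'') does not define an actual operation, and ``fusing'' $B_j$ into $B_i$ is also invalid: adjacency of $x_{m(B_i)}$ to $y_{m'(B_j)}$ does not even imply adjacency to all of $Y(B_j)$, completeness of the union is not implied, and fusing lowers the number of blocks, destroying the required induced matching with one edge per block. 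The paper's proof moves vertices in the opposite direction: it \emph{enlarges} $B_1$ to $B'_1$ by absorbing from $B_2$ precisely the vertices $x_k\in X(B_2)$ adjacent to $y_{m'(B_1)}$ and the vertices $y_\ell\in Y(B_2)$ adjacent to $x_{m(B_1)}$. The endpoints of $B_2$'s matched edge cannot be among the absorbed vertices (otherwise, by the interval structure of neighborhoods, they would be adjacent to an endpoint of $B_1$'s matched edge, contradicting $3$-disjointness), so the new family is still in $\MS(H)$, and the vertex union, hence $d$, is unchanged; only then does one peel off $B'_1$ and induct on $H'=H-\{x_i,y_j:\,i\le M(B'_1),\ j\le M'(B'_1)\}$, as in your last paragraph. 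That absorption step, which simultaneously kills both kinds of cross edges at the first block, is the key idea missing from your proposal.
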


\begin{proof}
  Let $X'_1$ be obtained by adding all the vertices $x_k\in X(B_2)$ which are adjacent to $y_{m'(B_1)}$ to the set $X(B_1)$. Also, let $Y'_1$ be obtained by adding all the vertices $y_{\ell}\in Y(B_2)$ which are adjacent to $x_{m(B_1)}$ to the set $Y(B_1)$. 
  Now we set $B'_1$ to be the complete bipartite subgraph of $H$ with $X(B'_1)=X'_1$ and $Y(B'_1)=Y'_1$. We also denote the remaining subgraph of $B'_2$, by $\tilde{B}_2$. Since $\MB$ admits an induced matching of size $r$ arising from each $B_i$, it follows that not all elements of $X(B_2)$ (resp. $Y(B_2)$) are moved into $X(B'_1)$ (resp. $Y(B'_1)$). By the construction of $H$, it is also obvious that none of the elements of $X(B_i)$ and $Y(B_i)$ for $i>2$, are adjacent to $y_{m'(B_1)}$ and $x_{m(B_1)}$, respectively. So, we obtain $\{B'_1,\tilde{B_2},B_3,\ldots,B_r\}\in \MS(H)$ such that clearly we have 
  \begin{equation}\label{d(tilde{B})}
  d(\MB)\leq d(\{B'_1,\tilde{B_2},B_3,\ldots,B_r\}).
  \end{equation}  
  The graph $H'=H-\{x_i,y_j:i\leq M(B'_1), j\leq M'(B'_1)\}$ is a strongly biconvex graph and $\{\tilde{B_2},B_3,\ldots,B_r\}\in \MS(H')$. Hence, by induction on $r$ it follows that there exists $\tilde{\MB}=\{B'_2,\ldots,B'_r\}\in \MS(H')$ with 
  $d(\{\tilde{B_2},B_3,\ldots,B_r\})\leq d(\tilde{\MB})$. We let $\MB'=\{B'_1\}\cup \tilde{B}$ which is in $\MS(H)$ and using \eqref{d(tilde{B})} we get $d(\MB)\leq d(\MB')$. The induction hypothesis also yields that the edges $\{x_{m(B'_i)},y_{m'(B'_i)}\}$ for $i=2,\ldots,r$ provide an induced matching for $H$. Our procedure to construct $\MB'$ implies that the edge $\{x_{m(B'_1)},y_{m'(B'_1)}\}$ could be also added to this induced matching, as desired.   
\end{proof}

Let $H$ be a strongly biconvex graph. Let $\MB\in \MS (H)$ which satisfies the conditions~(a) and~(b) of Lemma~\ref{ordered families} such that the set of edges $\MT$ of Lemma~\ref{top matching} provides an induced matching for it. Then, for simplicity, we call $\MB$ an \emph{ordered} strongly disjoint family of complete bipartite subgraphs of $H$. We denote by $\mathcal{OS}(H)$ the set of all such families for $H$.  

For any strongly biconvex graph $H=(X,Y)$, if $e=\{x_q,y_{q'}\}$ is an edge of $H$, then we denote the induced subgraph of $H$ on the set of vertices $N_H(x_q)\cup N_H(y_{q'})$ by $B_e$. It is easily seen that $B_e$ is a complete bipartite subgraph of $H$.

\begin{Theorem}\label{B_e}
	Let $H=(X,Y)$ be a strongly biconvex graph with no isolated vertex, and let $\MB=\{B_1,\ldots,B_r\}\in \mathcal{OS}(H)$ with $d(\MB)=d(H)$. If $x_q\in V(B_1)$ and $e=\{x_q,y_{q'}\}$, then  $\MB'=\{B_e,B_2,\ldots,B_r\}\in \mathcal{OS}(H)$ with $d(\MB')=d(H)$.   
\end{Theorem}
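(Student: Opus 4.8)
The plan is to replace the block $B_1$ by the maximal complete bipartite subgraph $B_e$ sitting on the corner edge $e$, and then to verify three things: that $B_e$ is vertex-disjoint from $B_2,\dots,B_r$, that the resulting family is still ordered, and that no $d$-value is lost. Throughout I use the structure of an ordered family: since $\MB\in\mathcal{OS}(H)$, the edges $\{x_{m(B_i)},y_{m'(B_i)}\}$ form an induced matching $\MT$, and $M(B_i)<m(B_{i+1})$, $M'(B_i)<m'(B_{i+1})$. Because $x_q\in V(B_1)$ and $q=\min X$, we have $m(B_1)=q$, so the $B_1$-edge of $\MT$ is $\{x_q,y_{m'(B_1)}\}$; as $x_q$ is joined in $B_1$ to every vertex of $Y(B_1)$, Proposition~\ref{equivalent to strongly biconvex} gives $Y(B_1)\subseteq N_H(x_q)=\{y_t: q'\le t\le M(q)\}$, i.e. $q'\le m'(B_1)$ and $M'(B_1)\le M(q)$. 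On the other side, writing $M'=\max\{i:\{x_i,y_{q'}\}\in E(H)\}$, condition~(1) forces $M'<q'$, and $B_e$ is the complete bipartite graph with $X(B_e)=N_H(y_{q'})=\{x_i: q\le i\le M'\}$ and $Y(B_e)=N_H(x_q)=\{y_t: q'\le t\le M(q)\}$; in particular $m(B_e)=q$, $m'(B_e)=q'$ and $e=\{x_{m(B_e)},y_{m'(B_e)}\}$.

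First I would prove disjointness, for which it suffices to show $m(B_2)>M'$ and $m'(B_2)>M(q)$. If $x_{m(B_2)}$ were adjacent to $y_{q'}$, then from $m(B_2)<q'\le m'(B_1)\le M(q)\le M(m(B_2))$ (the last step by Proposition~\ref{equivalent to strongly biconvex}(b)) one gets $y_{m'(B_1)}\in N_H(x_{m(B_2)})$, contradicting that $\MT$ is induced; hence $m(B_2)>M'$, and since $m(B_j)\ge m(B_2)$ this separates $X(B_e)$ from every $X(B_j)$. Symmetrically, if $M(q)\ge m'(B_2)$ then $y_{m'(B_2)}\in N_H(x_q)$, again contradicting that $\MT$ is induced, so $M(q)<m'(B_2)\le m'(B_j)$ and $Y(B_e)$ is separated from every $Y(B_j)$. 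Thus $V(B_e)\cap V(B_j)=\emptyset$ for all $j\ge 2$.

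Next I would check $\MB'=\{B_e,B_2,\dots,B_r\}\in\mathcal{OS}(H)$. The very inequalities $M'<m(B_2)$ and $M(q)<m'(B_2)$ are condition~(a) of Lemma~\ref{ordered families} for the pair $(B_e,B_j)$; condition~(b) holds because $X(B_e),Y(B_e)$ are intervals; and the relations among $B_2,\dots,B_r$ are inherited from $\MB$. Finally, taking $e$ for $B_e$ and the old top edges for the rest, the set $\{e\}\cup\{\{x_{m(B_j)},y_{m'(B_j)}\}:j\ge 2\}$ is an induced matching: the same two inequalities give $\{x_q,y_{m'(B_j)}\}\notin E(H)$ and $\{x_{m(B_j)},y_{q'}\}\notin E(H)$, while the remaining non-adjacencies come from $\MT$. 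Hence $\MB'$ is an ordered strongly disjoint family.

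It remains to prove $d(\MB')=d(H)$, and here lies the main obstacle. Since only the first block changes, $d(\MB')-d(\MB)=|V(B_e)|-|V(B_1)|$, so by maximality of $d(\MB)=d(H)$ it is enough to show $|V(B_e)|\ge|V(B_1)|$, i.e.
\[
M'+M(q)+m'(B_1)\ \ge\ M(B_1)+M'(B_1)+q'.
\]
The delicate point is that $X(B_1)\not\subseteq X(B_e)$ in general (a vertex $x_i\in X(B_1)$ with $i\ge q'$ is not adjacent to $y_{q'}$), so one cannot argue by containment; the loss on the $X$-side has to be compensated on the $Y$-side, and it is exactly strong biconvexity that guarantees this balance. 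I would split on whether $M(B_1)<q'$ or $M(B_1)\ge q'$. If $M(B_1)<q'$, then $m(M(B_1))=q'\le m'(B_1)\le M(M(B_1))$ yields $\{x_{M(B_1)},y_{q'}\}\in E(H)$, so $M(B_1)\le M'$; together with $M'(B_1)\le M(q)$ and $q'\le m'(B_1)$ the inequality is immediate. If instead $M(B_1)\ge q'$, then $x_{q'-1}\in X$ is non-isolated, hence adjacent to $y_{q'}$, forcing $M'=q'-1$; moreover $\{x_{M(B_1)},y_{m'(B_1)}\}\in E(H)$ with $M(B_1)\ge q'$ gives $m'(B_1)\ge M(B_1)+1$, so $(M'-M(B_1))+(m'(B_1)-q')\ge 0$, and adding $M(q)-M'(B_1)\ge 0$ completes it. Either way $|V(B_e)|\ge|V(B_1)|$, whence $d(\MB')\ge d(\MB)=d(H)$ and therefore $d(\MB')=d(H)$.
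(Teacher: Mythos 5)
Your proof is correct, and while it follows the paper's high-level plan (exchange $B_1$ for $B_e$, with the same case split $M(B_1)<q'$ versus $M(B_1)\ge q'$), the mechanism is genuinely different. The paper's proof is a rigidity argument driven by maximality: in Case~1 it shows that if $B_1\neq B_e$ one could enlarge $B_1$ by a single vertex (adding $y_{q'}$, $y_{M'(B_1)+1}$ or $x_{M(B_1)+1}$), contradicting $d(\MB)=d(H)$, so that $B_1$ \emph{is} $B_e$; in Case~2 maximality forces $m'(B_1)=M(B_1)+1$, after which an equal-size swap to the block on $\{x_q,\ldots,x_{q'-1}\}\cup\{y_{q'},\ldots,y_{M'(B_1)}\}$ reduces to Case~1. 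You instead use the hypothesis $d(\MB)=d(H)$ only in the final line: you verify unconditionally that replacing $B_1$ by $B_e$ yields a family in $\mathcal{OS}(H)$ (your disjointness and induced-matching checks, via the edges of $\MT$ and Proposition~\ref{equivalent to strongly biconvex}, make explicit what the paper leaves as ``easily seen''), and you prove the counting inequality $|V(B_e)|\ge |V(B_1)|$ directly from strong biconvexity, with maximality then giving equality. Your route therefore establishes a stronger, reusable statement --- for \emph{any} ordered family $\MB$ with $x_q\in V(B_1)$, not just a $d$-maximal one, the exchange $B_1\mapsto B_e$ stays in $\mathcal{OS}(H)$ and never decreases $d$ --- whereas the paper's route yields the sharper structural fact that under maximality $B_1$ already equals $B_e$ up to an equal-size swap (so in fact $|V(B_e)|=|V(B_1)|$ there). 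Both conclusions suffice for the applications in Section~\ref{edge ideals}.
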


\begin{proof}
	We distinguish the following two cases. 
	
	Case~1. Suppose that $M(B_1)<q'$. If $q'<m'(B_1)$, then all vertices in $X(B_1)$ are adjacent to $y_{q'}$ in $H$. So, by adding $y_{q'}$ to $Y(B_1)$, one could replace $B_1$ in $\MB$ with a complete bipartite subgraph with one more vertex, which contradicts the assumption $d(\MB)=d(H)$. Therefore, we have $q'=m'(B_1)$. If $\{x_q,y_{M'(B_1)+1}\}\in E(H)$, then it follows that $y_{M'(B_1)+1}$ is not a vertex of $B_2$ and hence any of $B_i$'s in $\MB$. Otherwise, it participates in the induced matching $\MT$ of Lemma~\ref{top matching}, a contradiction. Thus, by adding $y_{M'(B_1)+1}$ to $Y(B_1)$, again we can replace $B_1$ with a complete bipartite graph with more vertices, contradicting $d(\MB)=d(H)$. This implies that $M'(B_1)$ is the maximum index that a neighbor of $x_q$ has, and hence 
	\[
	N_H(x_q)=\{y_j: j\in [q',M'(B_1)]\}.
	\]
	Similarly, if $\{x_{M(B_1)+1},y_{q'}\}\in E(H)$, then by adding $x_{M(B_1)+1}$ to $X(B_1)$, one gets a contradiction to $d(\MB)=d(H)$. Thus, $M(B_1)+1$ is the maximum index of the neighbors of $y_{q'}$, and hence 
	\[
	N_H(y_{q'})=\{x_i: i\in [q,M(B_1)]\}.
	\]    
	Therefore, in this case we have $B_1=B_e$. 
	
	Case~2. Suppose that $q'\leq M(B_1)$. Clearly, we have $m'(B_1)\geq M(B_1)+1$. If $m'(B_1) > M(B_1)+1$, then all the vertices of $X(B_1)$ are adjacent to $y_{M(B_1)+1}$. Therefore, similar to the previous case, we may add this vertex to $Y(B_1)$, which  contradicts the assumption $d(\MB)=d(H)$. Thus, we have $m'(B_1)=M(B_1)+1$. Now, let $B'_1$ be the complete bipartite subgraph of $H$ on the vertex set $\{x_q,\ldots,x_{q'-1}\}\cup \{y_{q'},\ldots,y_{M'(B_1)}\}$, and let $\MB'=\{B'_1,B_2,\ldots,B_r\}$. Then it is easily seen that $\MB'\in \mathcal{OS}(H)$. On the other hand, we have $|V(B'_1)|=|V(B_1)|$ which implies that $d(\MB')=d(\MB)$. Now, it is enough to verify that $B'_1=B_e$ which follows from the first case, since we have $M(B'_1)=q'-1<q'$.       
\end{proof}

\begin{Corollary}\label{d(H)}
  Let $H=(X,Y)$ be a strongly biconvex graph with no isolated vertices and let $e=\{x_q,y_{q'}\}$. Then 
  \[
  d(H)=\max \{d(H-V(B_e))+|V(B_e)|-1,d(H-x_q)\}. 
  \]   
\end{Corollary}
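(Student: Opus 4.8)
The plan is to prove the two inequalities $d(H)\le \max\{d(H-V(B_e))+|V(B_e)|-1,\,d(H-x_q)\}$ and $d(H)\ge \max\{d(H-V(B_e))+|V(B_e)|-1,\,d(H-x_q)\}$ separately. Recall that $x_q$ is the vertex of minimum index in $X$ and that $e=\{x_q,y_{q'}\}\in E(H)$ by Remark~\ref{no isolated}, so that $B_e$ is a well-defined complete bipartite subgraph of $H$.

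For the lower bound I would show that each of the two terms is bounded above by $d(H)$. The inequality $d(H-x_q)\le d(H)$ is immediate, since $H-x_q$ is an induced subgraph of $H$ and every strongly disjoint family of complete bipartite subgraphs of $H-x_q$ is also one of $H$. For the other term, I would take $\{C_1,\ldots,C_s\}\in\MS(H-V(B_e))$ with $d(\{C_1,\ldots,C_s\})=d(H-V(B_e))$ and adjoin $B_e$. The family $\{B_e,C_1,\ldots,C_s\}$ is vertex-disjoint by construction; the only point to check is that $e$ together with a chosen induced matching of the $C_i$'s is still an induced matching of $H$. This follows because every vertex of the $C_i$'s lies outside $V(B_e)=N_H(x_q)\cup N_H(y_{q'})$, so no such vertex can be adjacent to $x_q$ or to $y_{q'}$. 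A direct count then gives $d(\{B_e,C_1,\ldots,C_s\})=|V(B_e)|-1+d(H-V(B_e))$, which yields $d(H)\ge d(H-V(B_e))+|V(B_e)|-1$.

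For the upper bound, I would first invoke Lemmas~\ref{ordered families} and~\ref{top matching} to select an ordered family $\MB=\{B_1,\ldots,B_r\}\in\mathcal{OS}(H)$ with $d(\MB)=d(H)$. Then I split into two cases according to whether $x_q\in V(\MB)$. If $x_q\notin V(\MB)$, then $\MB\in\MS(H-x_q)$, and hence $d(H)=d(\MB)\le d(H-x_q)$. If $x_q\in V(\MB)$, then condition~(a) of Lemma~\ref{ordered families} forces $x_q\in V(B_1)$: any block $B_k$ containing $x_q$ has $m(B_k)=q$, and an earlier block $B_i$ with $i<k$ would require $M(B_i)<q$, which is impossible since all $X$-indices are at least $q$. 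Now Theorem~\ref{B_e} applies and replaces $B_1$ by $B_e$ to produce $\MB'=\{B_e,B_2,\ldots,B_r\}\in\mathcal{OS}(H)$ with $d(\MB')=d(H)$; since $B_2,\ldots,B_r$ are vertex-disjoint from $B_e$ they form a family in $H-V(B_e)$, and the count $d(\MB')=|V(B_e)|-1+d(\{B_2,\ldots,B_r\})$ gives $d(H)\le |V(B_e)|-1+d(H-V(B_e))$. In both cases $d(H)$ is bounded above by the stated maximum, completing the argument.

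The main obstacle I anticipate lies in the upper bound: the crux is securing an \emph{optimal} family in $\mathcal{OS}(H)$ (so that Theorem~\ref{B_e} is applicable) and correctly arguing that the ordering forces the minimum-index vertex $x_q$ into the first block $B_1$. By contrast, the bookkeeping in the lower bound — checking that adjoining $B_e$ preserves the induced-matching property — is routine once one observes that vertices outside $V(B_e)$ cannot be adjacent to $x_q$ or $y_{q'}$, and the two cardinality computations are elementary.
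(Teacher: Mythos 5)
Your proof is correct and takes essentially the same route as the paper: pass to an optimal family in $\mathcal{OS}(H)$ via Lemmas~\ref{ordered families} and~\ref{top matching}, split on whether $x_q$ lies in it, apply Theorem~\ref{B_e} in the latter case to bound $d(H)$ above, and obtain the reverse inequality by adjoining $B_e$ to an optimal family of $H-V(B_e)$. If anything, you are more explicit than the paper, which compresses the lower bound to ``by definitions'' and leaves implicit both the argument that the ordering forces $x_q$ into $B_1$ and the check that vertices outside $V(B_e)=N_H(x_q)\cup N_H(y_{q'})$ cannot spoil the induced matching.
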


\begin{proof}
	Let $\MB=\{B_1,\ldots,B_r\}\in \mathcal{OS}(H)$ with $d(\MB)=d(H)$. If $x_q\notin V(B_1)$, then it is clear that $\MB\in \mathcal{OS}(H-x_q)$ which implies that $d(H)\leq d(H-x_q)$. If $x_q\in V(B_1)$, then by Theorem~\ref{B_e} we have $\MB'=\{B_e,B_2,\ldots,B_r\}\in \mathcal{OS}(H)$ and $d(\MB')=d(H)$. It is easily seen that $H-V(B_e)$ is a strongly biconvex graph and $\{B_2,\ldots,B_r\}\in \mathcal{OS}(H-V(B_e))$. This implies that $d(H)\leq d(H-V(B_e))+|V(B_e)|-1$. Thus, 
	$d(H)\leq \max \{d(H-V(B_e))+|V(B_e)|-1,d(H-x_q)\}$. On the other hand, by definitions, it easily follows that 
	$d(H)\geq \max \{d(H-V(B_e))+|V(B_e)|-1,d(H-x_q)\}$, since $H-x_q$ and $H-V(B_e)$ are induced subgraphs of $H$. Therefore, the desired equality holds.    
\end{proof}

\section{Extremal Betti numbers of monomial and binomial edge ideals of graphs}\label{edge ideals}
	
 In this section, we study the extremal Betti numbers of some monomial and binomial ideals associated to graphs. The main goal of this section is to provide certain strongly biconvex graphs whose monomial/binomial edge ideals do not have a unique extremal Betti number. This, in particular, provides a negative answer to \cite[Question~1]{AH}.   
	
 Let $R=\KK[x_1,\ldots,x_n]$ be a polynomial ring over a field $\KK$ and let $I$ be a homogeneous ideal in $R$. Also let  
	\[
	0 \rightarrow \bigoplus_{j} R(-j)^{\beta_{p,j}(R/I)} \rightarrow 
	\cdots \rightarrow \bigoplus_j R(-j)^{\beta_{1,j}(R/I)}\rightarrow 
	R\rightarrow R/I \rightarrow 0  
	\]
    be the \emph{minimal (standard) $\ZZ$-graded free resolution} of $R/I$ over $R$ with $\deg (x_i)=1$ for all $i$. Here $p$ is the \emph{projective dimension} of $R/I$, denoted by $\projdim (R/I)$, and $\beta_{i,j}(R/I)$ is the $(i,j)$-graded Betti number of $R/I$. The \emph{Castelnuovo-Mumford regularity} of $R/I$ is 
    \[
    \reg (R/I)=\max \{j-i: \beta_{i,j}(R/I)\neq 0\}.
    \]
    Considering the natural $\ZZ^n$-grading of $R$ given by $\deg (x_i)=e_i$, instead of the standard $\ZZ$-grading, one obtains the minimal $\ZZ^n$-graded free resolution, and hence the $\ZZ^n$-graded Betti numbers $\beta_{i,\sigma}(R/I)$ with $\sigma\in \ZZ^n$. Here $e_i$ denotes the $i^{th}$ standard basis vector in $\ZZ^n$.    
     
	A nonzero graded Betti number $\beta_{i,j}(R/I)$ of $R/I$ is called an \emph{extremal} Betti number if $\beta_{k,\ell}(R/I)=0$ for all $k\geq i$ and $\ell\geq j$ with $(k,\ell)\neq (i,j)$. It is easily seen that $R/I$ has a unique extremal Betti number if and only if $\beta_{p,p+r}(R/I)\neq 0$ where $p=\projdim (R/I)$ and $r=\reg (R/I)$. 
	
	\medskip
	We divide the rest of this section into two subsections devoted to the cases of monomial edge ideals and binomial edge ideals, respectively.   
	
	\subsection{(Monomial) edge ideals of graphs} 
	
	Let $R=\KK[x_1,\ldots,x_n]$ as above. 
	Recall that the (monomial) edge ideal of a graph $G$ on $n$ vertices is defined as 
	\[
	I(G)=(x_ix_j:\{i,j\}\in E(G)).
	\]    

	We gather some known results regarding the graded Betti numbers, 
	the projective dimension and the Castelnuovo-Mumford regularity of the (monomial) edge ideals of weakly chordal graphs in the next theorem. Here, for any $\sigma\subseteq V(G)$ we identify $\sigma$ and its characteristic vector in $\ZZ^n$. 
	
	\begin{Theorem}\label{weakly chordal-Betti}
		Let $G$ be a weakly chordal graph on $n$ vertices. Then the following statements hold:
		\begin{enumerate}
			\item[\em (a)] {\em(}\cite[Theorem~1.1]{K},\cite[Theorem~3.4]{MP}{\em)} $\beta_{|\sigma|-r, \sigma}(R/I(G))\neq 0$ if and only if there exists  $\MB\in \MS(G)$ with $V(\MB)=\sigma$ and $r=|\MB|$. 
			\item[\em (b)] \cite[Theorem~14]{W} $\reg (R/I(G))=\inm (G)$.
			\item[\em (c)] \cite[Theorem~7.7]{NV} 
			$\projdim (R/I(G))=d(G)$.
		\end{enumerate}
	\end{Theorem}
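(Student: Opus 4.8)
The plan is to observe that all three assertions are essentially restatements of results already available in the literature, so that the proof reduces to matching each part to its source and, for part~(a), to explaining how the two cited results combine into the stated equivalence.

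For part~(a), I would first recall Hochster's formula, which expresses the $\ZZ^n$-graded Betti number $\beta_{|\sigma|-r,\sigma}(R/I(G))$ as the dimension of a reduced simplicial homology group of the independence complex of the induced subgraph of $G$ on the vertex set $\sigma$. The sufficiency direction --- that the existence of a family $\MB\in \MS(G)$ with $V(\MB)=\sigma$ and $|\MB|=r$ forces this Betti number to be nonzero --- is precisely Kimura's non-vanishing theorem \cite[Theorem~1.1]{K}, which requires no hypothesis on $G$ beyond the existence of the family. For the necessity direction I would invoke \cite[Theorem~3.4]{MP}, which, under the weakly chordal hypothesis, guarantees that every nonvanishing multigraded Betti number is realized by such a family; this is the point at which weak chordality is genuinely used. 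Intersecting the two implications yields the asserted ``if and only if''.

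Parts~(b) and~(c) are direct citations. For~(b), the bound $\inm(G)\le \reg(R/I(G))$ holds for every graph by Katzman's inequality recalled in the introduction, while the reverse inequality $\reg(R/I(G))\le \inm(G)$ for weakly chordal graphs is \cite[Theorem~14]{W}; the two together give the equality. Part~(c), namely $\projdim(R/I(G))=d(G)$, is exactly \cite[Theorem~7.7]{NV}, so here nothing beyond the citation is needed.

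The only genuine content, as opposed to bare citation, lies in the necessity half of part~(a): one must check that the formulation of \cite[Theorem~3.4]{MP} really matches the ``$\beta\neq 0 \Rightarrow$ family exists'' direction in the present $\ZZ^n$-graded language, and that its hypotheses are subsumed by weak chordality. I expect this reconciliation --- aligning the homological index $(|\sigma|-r,\sigma)$ and the definition of a strongly disjoint family across the two sources --- to be the main, though modest, obstacle, since the remaining statements are transcribed verbatim from their references.
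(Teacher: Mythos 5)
Your proposal is correct and matches the paper's treatment: the paper offers no proof of this theorem at all, presenting it purely as a compilation of cited results, and your reading of how the citations fit together --- Kimura \cite[Theorem~1.1]{K} for the sufficiency in (a) with no hypothesis on $G$, \cite[Theorem~3.4]{MP} for the necessity under weak chordality, Katzman's lower bound plus \cite[Theorem~14]{W} for (b), and \cite[Theorem~7.7]{NV} verbatim for (c) --- is exactly the intended one.
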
 
	
	By Proposition~\ref{weakly chordal}, all of the statements in Theorem~\ref{weakly chordal-Betti} hold for any strongly biconvex graph. So, as an immediate consequence of this theorem and Corollary~\ref{d(H)}, we get the following recursive formula for the projective dimension of the (monomial) edge ideal of a strongly biconvex graph. 
	
	\begin{Corollary}\label{proj}
     Let $H=(X,Y)$ be a strongly biconvex graph with no isolated vertices and $e=\{x_q,y_{q'}\}$. Moreover, let $p_1=\projdim (S_1/I(H-V(B_e)))$ and $p_2=\projdim (S_2/I(H-x_q))$, where $S_1$ and $S_2$ are the polynomial rings over $\KK$ with variables correspond to vertices of $H-V(B_e)$ and $H-x_q$, respectively. Then 
     \[
     \projdim (S/I(H))=\max \{p_1+\deg_H(x_q)+\deg_H(y_{q'})-1, p_2\}.
     \]
	\end{Corollary}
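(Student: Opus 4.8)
The plan is to combine the recursive structure of $d(H)$ established in Corollary~\ref{d(H)} with the identification $\projdim(R/I(G)) = d(G)$ for weakly chordal graphs provided by Theorem~\ref{weakly chordal-Betti}(c). Since every strongly biconvex graph is weakly chordal by Proposition~\ref{weakly chordal}, this identity applies to $H$ as well as to the induced subgraphs $H-V(B_e)$ and $H-x_q$. First I would record that $H-V(B_e)$ and $H-x_q$ are themselves induced subgraphs of the strongly biconvex graph $H$, hence strongly biconvex and in particular weakly chordal, so Theorem~\ref{weakly chordal-Betti}(c) yields $p_1 = d(H-V(B_e))$ and $p_2 = d(H-x_q)$.

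Next I would translate the combinatorial quantity $|V(B_e)|$ appearing in Corollary~\ref{d(H)} into the degree data of the statement. By definition $B_e$ is the induced subgraph of $H$ on $N_H(x_q)\cup N_H(y_{q'})$, and since $e=\{x_q,y_{q'}\}$ is an edge with $x_q$ the minimal-index vertex of $X$, the vertex $x_q$ lies in $N_H(y_{q'})$ and $y_{q'}$ lies in $N_H(x_q)$. The vertex set of $B_e$ therefore consists of the $\deg_H(y_{q'})$ neighbors of $y_{q'}$ (all in $X$) together with the $\deg_H(x_q)$ neighbors of $x_q$ (all in $Y$), and these two sets are disjoint since $H$ is bipartite. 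Consequently $|V(B_e)| = \deg_H(x_q) + \deg_H(y_{q'})$, so that $|V(B_e)|-1 = \deg_H(x_q)+\deg_H(y_{q'})-1$.

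Substituting these identifications into Corollary~\ref{d(H)} gives
\[
\projdim(S/I(H)) = d(H) = \max\{\,d(H-V(B_e)) + |V(B_e)| - 1,\; d(H-x_q)\,\},
\]
which upon replacing $d(H-V(B_e)) = p_1$, $d(H-x_q) = p_2$, and $|V(B_e)|-1 = \deg_H(x_q)+\deg_H(y_{q'})-1$ becomes exactly the asserted formula. The only genuine verification is the degree count $|V(B_e)| = \deg_H(x_q)+\deg_H(y_{q'})$; the main point to be careful about is confirming that $N_H(x_q)$ and $N_H(y_{q'})$ are disjoint and that no double-counting occurs, which follows immediately from bipartiteness since one set lies entirely in $Y$ and the other entirely in $X$. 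I expect this step to be the only place requiring attention, and it is routine; the rest is a direct substitution into the already-established Corollary~\ref{d(H)}.
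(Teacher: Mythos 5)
Your proof is correct and takes essentially the same route as the paper, which presents this corollary as an immediate consequence of Proposition~\ref{weakly chordal}, Theorem~\ref{weakly chordal-Betti}(c) and Corollary~\ref{d(H)} --- precisely the substitution you carry out. The one detail you make explicit beyond the paper, namely that $V(B_e)=N_H(x_q)\cup N_H(y_{q'})$ is a disjoint union by bipartiteness so that $|V(B_e)|=\deg_H(x_q)+\deg_H(y_{q'})$, is exactly the intended (routine) justification.
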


\medskip 
Now, we construct a strongly biconvex graph $H_0=(X,Y)$, which plays role in the rest of this section, as follows. Let
\[
X=\{x_1,\ldots,x_{15}\}~~\text{and}~~Y=\{y_3,\ldots,y_{16}\} 
\] 
such that 
\[
m(1)=3~~,~~m(i)=i+1~~\text{for any}~~i=2,\ldots,15 
\]
and 
\[
M(1)=4~~,~~M(2)=8~~,~~M(j)=13~~\text{for any}~~j=3,\ldots,7,
\]
\[
M(8)=14~~,~~M(j)=16~~\text{for any}~~j=9,\ldots,15.
\]

The graph $H_0$ is depicted in Figure~\ref{fig H_0}. In the next theorem we investigate about the uniqueness of extremal Betti numbers of the (monomial) edge ideal of $H_0$.

\begin{Theorem}\label{H_0}
	Let $R=\KK[x_1,\ldots,x_{15},y_3,\ldots,y_{16}]$, and let $p=\projdim (R/I(H_0))$. Then $\beta_{p,p+4}(R/I(H_0))=0$. In particular, $R/I(H_0)$ does not have a unique extremal Betti number. 
\end{Theorem}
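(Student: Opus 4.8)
The plan is to convert the statement $\beta_{p,p+4}(R/I(H_0))=0$ into a statement about strongly disjoint families and then rule out the dangerous families by a rigidity argument. First I would record the two invariants. Running the algorithm of Theorem~\ref{induced matching} on $H_0$ yields $i_1=1,i_2=3,i_3=8,i_4=14$ (with $j_\ell=M(i_{\ell-1})+1$ by Remark~\ref{linear time}), so $\MM(H_0)=\{\{x_1,y_3\},\{x_3,y_5\},\{x_8,y_{14}\},\{x_{14},y_{15}\}\}$ and $\inm(H_0)=4$; hence $\reg(R/I(H_0))=4$ by Theorem~\ref{weakly chordal-Betti}(b). By the criterion recalled above (uniqueness of the extremal Betti number is equivalent to $\beta_{p,p+r}\neq 0$ with $r=\reg$), it suffices to prove $\beta_{p,p+4}(R/I(H_0))=0$ for $p=\projdim(R/I(H_0))=d(H_0)$ (Theorem~\ref{weakly chordal-Betti}(c)); the non-uniqueness then follows at once. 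Now Theorem~\ref{weakly chordal-Betti}(a) shows $\beta_{p,p+4}(R/I(H_0))\neq 0$ precisely when there is a family $\MB\in\MS(H_0)$ with $V(\MB)=\sigma$, $|\sigma|=p+4$ and $|\MB|=4$, i.e. $d(\MB)=|\sigma|-|\MB|=p$. Thus the entire theorem reduces to the claim: \emph{no four-member strongly disjoint family of complete bipartite subgraphs of $H_0$ attains the value $d(H_0)$.}

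Next I would bound $d(H_0)$ from below, using the explicit neighbourhoods $N_{H_0}(x_i)=\{y_t:t\in[m(i),M(i)]\}$ of Proposition~\ref{equivalent to strongly biconvex}. The three complete bipartite subgraphs on $\{x_2\}\cup\{y_3,\dots,y_8\}$, on $\{x_3,\dots,x_8\}\cup\{y_9,\dots,y_{13}\}$, and on $\{x_9,\dots,x_{13}\}\cup\{y_{14},y_{15},y_{16}\}$ are pairwise vertex-disjoint, and the edges $\{x_2,y_3\},\{x_3,y_9\},\{x_9,y_{14}\}$ form an induced matching, so they constitute a member of $\MS(H_0)$ of $d$-value $(7+11+8)-3=23$. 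Hence $d(H_0)\geq 23$; in fact the recursion of Corollary~\ref{d(H)} gives $d(H_0)=23$, but for the argument only the estimate $d(H_0)>21$ will be used.

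The heart of the proof is the bound on four-member families. By Lemmas~\ref{ordered families} and~\ref{top matching} any $\MB\in\MS(H_0)$ with $|\MB|=4$ may be replaced by an \emph{ordered} family $\MB'\in\mathcal{OS}(H_0)$ with the same number of members and $d(\MB)\leq d(\MB')$, so it is enough to bound ordered four-member families. Writing the $i$-th block on $x$-indices $[a_i,b_i]$ and $y$-indices $[c_i,d_i]$ (intervals, by Lemma~\ref{ordered families}(b)) with $a_1<\cdots<a_4$, the complete bipartite condition forces $d_i\leq M(a_i)$ and $c_i\geq m(b_i)\geq b_i+1$, whence each block contributes at most $M(a_i)-a_i$ to $d(\MB')$. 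The requirement that the canonical edges $\{x_{a_i},y_{c_i}\}$ form an induced matching forces, for consecutive blocks, both $\{x_{a_i},y_{c_{i+1}}\}\notin E(H_0)$ and $\{x_{a_{i+1}},y_{c_i}\}\notin E(H_0)$, i.e. $c_{i+1}>M(a_i)$ and $c_i\leq a_{i+1}$; combined, these give $M(a_1)<M(a_2)<M(a_3)<M(a_4)$, together with $a_{i+2}\geq M(a_i)+1$ and $a_2\geq 3$. Testing these against the five possible values $M\in\{4,8,13,14,16\}$ leaves $\{4,13,14,16\}$ as the only admissible pattern of $M$-levels, realized by $a_1=1$, $a_2\in\{3,\dots,7\}$, $a_3=8$, $a_4\in\{14,15\}$. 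Summing the per-block bound then gives
\[
d(\MB')\leq (M(1)-1)+(M(a_2)-a_2)+(M(8)-8)+(M(a_4)-a_4)\leq 3+10+6+2=21,
\]
so $d(\MB)\leq 21<23\leq d(H_0)$. Therefore no four-member family attains $d(H_0)$, $\beta_{p,p+4}(R/I(H_0))=0$, and $R/I(H_0)$ has more than one extremal Betti number.

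The step I expect to be the main obstacle is precisely this last rigidity analysis: one has to extract from ``strongly disjoint with four members'' the two asymmetric non-adjacency inequalities produced by the canonical induced matching, translate them into the constraints $c_{i+1}>M(a_i)$ and $c_i\leq a_{i+1}$ on the block endpoints, and then carry out the finite but delicate elimination of $M$-level patterns, checking that no admissible four-block configuration slips past the bound $21$. Getting the boundary blocks (the forced level-$4$ block around $x_1$ and the squeezed level-$16$ block at the top) correctly accounted for is the place where an error would most easily creep in.
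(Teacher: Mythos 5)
Your proposal is correct, and it reaches the paper's conclusion by a genuinely different execution of the key step, even though the outer skeleton is the same: like the paper, you compute $\reg(R/I(H_0))=4$ from Theorem~\ref{induced matching}, use Theorem~\ref{weakly chordal-Betti}(a),(c) together with Lemmas~\ref{ordered families} and~\ref{top matching} to reduce everything to four-member families in $\mathcal{OS}(H_0)$, and you contradict with exactly the same three-member family of value $23$. Where you diverge is in how four-member families are ruled out. The paper argues \emph{conditionally on optimality}: assuming $d(\MB)=d(H_0)$, it invokes Theorem~\ref{B_e} repeatedly (at $x_1$, then $x_3$, then $x_8$), interleaved with algorithmic computations such as $\inm(H_0-x_1)=3$ and $\inm(H_0-\{x_1,\ldots,x_8,y_3,\ldots,y_{13}\})=1$, to force the hypothetical optimal family into one explicit shape with $d=21$. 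You instead prove an \emph{unconditional} bound valid for every ordered four-member family: complete bipartiteness gives the per-block estimate $|V(B_i')|-1\leq M(a_i)-a_i$, the canonical induced matching gives $c_{i+1}>M(a_i)$ and $c_i\leq a_{i+1}$, these force the strictly increasing $M$-level pattern $(4,13,14,16)$ with $a_1=1$, $a_2\geq 3$, $a_3=8$, $a_4\geq 14$, and summing gives $d\leq 3+10+6+2=21$. Your route dispenses entirely with Theorem~\ref{B_e} and the subsidiary induced-matching computations on vertex-deleted subgraphs, replacing a delicate structural forcing by endpoint arithmetic on intervals; the paper's route, in exchange, identifies the extremal four-member configuration explicitly and exercises the machinery (Theorem~\ref{B_e}) that it reuses again in the proof of Theorem~\ref{nonunique closed}. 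One immaterial discrepancy: your fourth algorithm edge $\{x_{14},y_{15}\}$ is indeed what Remark~\ref{linear time} produces ($j_4=M(8)+1=15$), whereas the paper lists $\{x_{14},y_{16}\}$; both are maximum induced matchings of size $4$, so nothing downstream is affected.
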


\begin{proof}
By Theorem~\ref{induced matching}, the set 
\[
\mathcal{M}(H_0)=\big \{ \{x_{1},y_{3}\}, \{x_{3},y_{5}\}, \{x_{8},y_{14}\}, \{x_{14},y_{16}\} \big \}
\] 
is a maximum induced matching for $H_0$. Then, $\reg (R/I(H_0))=4$, by Theorem~\ref{weakly chordal-Betti} part~(b). Therefore, the ``in particular" part follows once we prove $\beta_{p,p+4}(R/I(H_0))=0$. Suppose on the contrary that $\beta_{p,p+4}(R/I(H_0))\neq 0$. Thus, by Theorem~\ref{weakly chordal-Betti} part~(a), Lemma~\ref{ordered families} and Lemma~\ref{top matching}, there exists $\MB=\{B_1,B_2,B_3,B_4\}\in \mathcal{OS}(H)$ such that $|V(\MB)|=p+4$ and $d(H_0)=d(\MB)=p$. It is clear that $H_0-x_1$ is also a strongly biconvex graph. If $x_1\notin V(B_1)$, then $\MB\in \mathcal{OS}(H_0-x_1)$, a contradiction. Indeed, by Theorem~\ref{induced matching}, we have $\inm (H_0-x_1)=3$, 
while there are $4$ strongly disjoint complete subgraphs in $\MB$. So, suppose that $x_1\in V(B_1)$. Then by Theorem~\ref{B_e}, we may assume that $y_3\in V(B_1)$ and $B_1=B_e$ with $e=\{x_1,y_3\}$. Then $H_0-V(B_1)$ is strongly biconvex and we have $\{B_2,B_3,B_4\}\in \mathcal{OS}(H_0-V(B_1))$ and it is easily seen that 
\begin{equation}\label{d(H_0-V(B_1))}
d(H_0-V(B_1))=d(\{B_2,B_3,B_4\}).
\end{equation}
If $x_3\notin V(B_2)$, then it follows that $x_4,x_5,x_6,x_7\notin V(B_2)$, since otherwise the complete bipartite subgraph of $H_0$ on $V(B_2)\cup \{x_3\}$ together with $B_3$ and $B_4$ provide an element in $\mathcal{OS}(H_0-V(B_1))$, contradicting \eqref{d(H_0-V(B_1))}. On the other hand, by Theorem~\ref{induced matching},
$\inm (H_0-\{x_1,\ldots,x_7,y_3,y_4\})=2$, a contradiction to the fact that $\{B_2,B_3,B_4\}\in \mathcal{OS}(H_0-\{x_1,\ldots,x_7,y_3,y_4\})$. So, suppose that $x_3\in V(B_2)$. By Theorem~\ref{B_e}, we can take $\{B'_2,B_3,B_4\}\in \mathcal{OS}(H_0-V(B_1))$ where $B'_2$ is the complete bipartite graph on the vertex set $\{x_3,x_4\}\cup \{y_5,\ldots , y_{13}\}$ and such that 
\[
d(H_0-\{x_1,\ldots,x_7,y_3,\ldots, y_{13}\})=d(\{B_3,B_4\}).
\]  
If $x_8\notin V(B_3)$, then 
$\{B_3,B_4\}\in \mathcal{OS}(H_0-\{x_1,\ldots,x_8,y_3,\ldots, y_{13}\})$, a contradiction, since $\inm (H_0-\{x_1,\ldots,x_8,y_3,\ldots, y_{13}\})=1$. Therefore, suppose that $x_8\in V(B_3)$. Again, using Theorem~\ref{B_e}, we take $\{B'_3,B_4\}\in \mathcal{OS}(H_0-\{x_1,\ldots,x_7,y_3,\ldots, y_{13}\})$ where $B'_3$ is the complete bipartite subgraph on the vertices $\{x_8,\ldots,x_{13},y_{14}\}$ and 
\[
d(H_0-\{x_1,\ldots,x_{13},y_3,\ldots,y_{14}\})=d(\{B_4\}).
\]
The complete bipartite subgraph $B_4$ clearly consists of $3$ vertices, either $\{x_{14},x_{15},y_{16}\}$ or $\{x_{14},y_{15},y_{16}\}$. Finally, we get $d(H_0)=d(\{B_1,B'_2,B'_3,B_4\})$. But, we have \[
d(\{B_1,B'_2,B'_3,B_4\})=21,
\]
and hence $d(H_0)=p=21$. But the latter is a contradiction, since there is $\tilde{\MB}=\{\tilde{B_1},\tilde{B_2},\tilde{B_3}\}\in \mathcal{OS}(H_0)$ with $d(\tilde{\MB})=23>d(H_0)$ as follows: 
$V(\tilde{B_1})=\{x_2\}\cup \{y_3,\ldots,y_8\}$, 
$V(\tilde{B_2})=\{x_3,\ldots,x_8\}\cup \{y_9,\ldots,y_{13}\}$ and 
$V(\tilde{B_3})=\{x_9,\ldots,x_{13}\}\cup \{y_{14},y_{15},y_{16}\}$. 

Therefore, we deduce that $\beta_{p,p+4}(R/I(H_0))=0$, as desired.   
\end{proof} 

We would like to remark that arguments similar to our proof of Theorem~\ref{H_0} show that the projective dimension of $S/I(H_0)$ is indeed equal to $23$. 

\begin{figure}[h!]
	\centering
	{	\begin{tikzpicture}[scale = 1.3]
		\begin{scope}
		\draw (0,14) node[punkt] {} -- (2,12) node[punkt] {};
		\draw (0,14) node[punkt] {} -- (2,11) node[punkt] {};
		
		\draw (0,13) node[punkt] {} -- (2,12) node[punkt] {};
		\draw (0,13) node[punkt] {} -- (2,11) node[punkt] {};
		\draw (0,13) node[punkt] {} -- (2,10) node[punkt] {};
		\draw (0,13) node[punkt] {} -- (2,9) node[punkt] {};
		\draw (0,13) node[punkt] {} -- (2,8) node[punkt] {};
		\draw (0,13) node[punkt] {} -- (2,7) node[punkt] {};
		
		\draw (0,12) node[punkt] {} -- (2,11) node[punkt] {};
		\draw (0,12) node[punkt] {} -- (2,10) node[punkt] {};
		\draw (0,12) node[punkt] {} -- (2,9) node[punkt] {};
		\draw (0,12) node[punkt] {} -- (2,8) node[punkt] {};
		\draw (0,12) node[punkt] {} -- (2,7) node[punkt] {};
		\draw (0,12) node[punkt] {} -- (2,6) node[punkt] {};
		\draw (0,12) node[punkt] {} -- (2,5) node[punkt] {};
		\draw (0,12) node[punkt] {} -- (2,4) node[punkt] {};
		\draw (0,12) node[punkt] {} -- (2,3) node[punkt] {};
		\draw (0,12) node[punkt] {} -- (2,2) node[punkt] {};
		
		\draw (0,11) node[punkt] {} -- (2,10) node[punkt] {};
		\draw (0,11) node[punkt] {} -- (2,9) node[punkt] {};
		\draw (0,11) node[punkt] {} -- (2,8) node[punkt] {};
		\draw (0,11) node[punkt] {} -- (2,7) node[punkt] {};
		\draw (0,11) node[punkt] {} -- (2,6) node[punkt] {};
		\draw (0,11) node[punkt] {} -- (2,5) node[punkt] {};
		\draw (0,11) node[punkt] {} -- (2,4) node[punkt] {};
		\draw (0,11) node[punkt] {} -- (2,3) node[punkt] {};
		\draw (0,11) node[punkt] {} -- (2,2) node[punkt] {};
		
		\draw (0,10) node[punkt] {} -- (2,9) node[punkt] {};
		\draw (0,10) node[punkt] {} -- (2,8) node[punkt] {};
		\draw (0,10) node[punkt] {} -- (2,7) node[punkt] {};
		\draw (0,10) node[punkt] {} -- (2,6) node[punkt] {};
		\draw (0,10) node[punkt] {} -- (2,5) node[punkt] {};
		\draw (0,10) node[punkt] {} -- (2,4) node[punkt] {};
		\draw (0,10) node[punkt] {} -- (2,3) node[punkt] {};
		\draw (0,10) node[punkt] {} -- (2,2) node[punkt] {};
		
		\draw (0,9) node[punkt] {} -- (2,8) node[punkt] {};
		\draw (0,9) node[punkt] {} -- (2,7) node[punkt] {};
		\draw (0,9) node[punkt] {} -- (2,6) node[punkt] {};
		\draw (0,9) node[punkt] {} -- (2,5) node[punkt] {};
		\draw (0,9) node[punkt] {} -- (2,4) node[punkt] {};
		\draw (0,9) node[punkt] {} -- (2,3) node[punkt] {};
		\draw (0,9) node[punkt] {} -- (2,2) node[punkt] {};
		
		\draw (0,8) node[punkt] {} -- (2,7) node[punkt] {};
		\draw (0,8) node[punkt] {} -- (2,6) node[punkt] {};
		\draw (0,8) node[punkt] {} -- (2,5) node[punkt] {};
		\draw (0,8) node[punkt] {} -- (2,4) node[punkt] {};
		\draw (0,8) node[punkt] {} -- (2,3) node[punkt] {};
		\draw (0,8) node[punkt] {} -- (2,2) node[punkt] {};
		
		\draw (0,7) node[punkt] {} -- (2,6) node[punkt] {};
		\draw (0,7) node[punkt] {} -- (2,5) node[punkt] {};
		\draw (0,7) node[punkt] {} -- (2,4) node[punkt] {};
		\draw (0,7) node[punkt] {} -- (2,3) node[punkt] {};
		\draw (0,7) node[punkt] {} -- (2,2) node[punkt] {};
		\draw (0,7) node[punkt] {} -- (2,1) node[punkt] {};
		
		\draw (0,6) node[punkt] {} -- (2,5) node[punkt] {};
		\draw (0,6) node[punkt] {} -- (2,4) node[punkt] {};
		\draw (0,6) node[punkt] {} -- (2,3) node[punkt] {};
		\draw (0,6) node[punkt] {} -- (2,2) node[punkt] {};
		\draw (0,6) node[punkt] {} -- (2,1) node[punkt] {};
		\draw (0,6) node[punkt] {} -- (2,0) node[punkt] {};
		\draw (0,6) node[punkt] {} -- (2,-1) node[punkt] {};
		
		\draw (0,5) node[punkt] {} -- (2,4) node[punkt] {};
		\draw (0,5) node[punkt] {} -- (2,3) node[punkt] {};
		\draw (0,5) node[punkt] {} -- (2,2) node[punkt] {};
		\draw (0,5) node[punkt] {} -- (2,1) node[punkt] {};
		\draw (0,5) node[punkt] {} -- (2,0) node[punkt] {};
		\draw (0,5) node[punkt] {} -- (2,-1) node[punkt] {};
		
		\draw (0,4) node[punkt] {} -- (2,3) node[punkt] {};
		\draw (0,4) node[punkt] {} -- (2,2) node[punkt] {};
		\draw (0,4) node[punkt] {} -- (2,1) node[punkt] {};
		\draw (0,4) node[punkt] {} -- (2,0) node[punkt] {};
		\draw (0,4) node[punkt] {} -- (2,-1) node[punkt] {};
		
		\draw (0,3) node[punkt] {} -- (2,2) node[punkt] {};
		\draw (0,3) node[punkt] {} -- (2,1) node[punkt] {};
		\draw (0,3) node[punkt] {} -- (2,0) node[punkt] {};
		\draw (0,3) node[punkt] {} -- (2,-1) node[punkt] {};
		
		\draw (0,2) node[punkt] {} -- (2,1) node[punkt] {};
		\draw (0,2) node[punkt] {} -- (2,0) node[punkt] {};
		\draw (0,2) node[punkt] {} -- (2,-1) node[punkt] {};
		
		\draw (0,1) node[punkt] {} -- (2,0) node[punkt] {};
		\draw (0,1) node[punkt] {} -- (2,-1) node[punkt] {};
		
		\draw (0,0) node[punkt] {} -- (2,-1) node[punkt] {};
		
		\node [left] at (0,10) {$x_5$};
		\node [left] at (0,11) {$x_4$};
		\node [left] at (0,12) {$x_3$};
		\node [left] at (0,13) {$x_2$};
		\node [left] at (0,14) {$x_1$};
		\node [right] at (2,12) {$y_3$};
		\node [right] at (2,11) {$y_4$};
		\node [right] at (2,10) {$y_5$};
		\node [right] at (2,9) {$y_6$};
		\node [right] at (2,8) {$y_7$};
		\node [right] at (2,7) {$y_8$};
		
		\node [left] at (0,9) {$x_{6}$};
		\node [left] at (0,8) {$x_{7}$};
		\node [left] at (0,7) {$x_{8}$};
		\node [left] at (0,6) {$x_{9}$};
		\node [left] at (0,5) {$x_{10}$};
		\node [right] at (2,6) {$y_{9}$};
		\node [right] at (2,5) {$y_{10}$};
		\node [right] at (2,4) {$y_{11}$};
		\node [right] at (2,3) {$y_{12}$};
		
		\node [left] at (0,0) {$x_{15}$};
		\node [left] at (0,1) {$x_{14}$};
		\node [left] at (0,2) {$x_{13}$};
		\node [left] at (0,3) {$x_{12}$};
		\node [left] at (0,4) {$x_{11}$};
		\node [right] at (2,-1) {$y_{16}$};
		\node [right] at (2,0) {$y_{15}$};
		\node [right] at (2,1) {$y_{14}$};
		\node [right] at (2,2) {$y_{13}$};
		\end{scope}
		\end{tikzpicture}
		\caption{The graph $H_0$}
		\label{fig H_0}
	}
\end{figure}
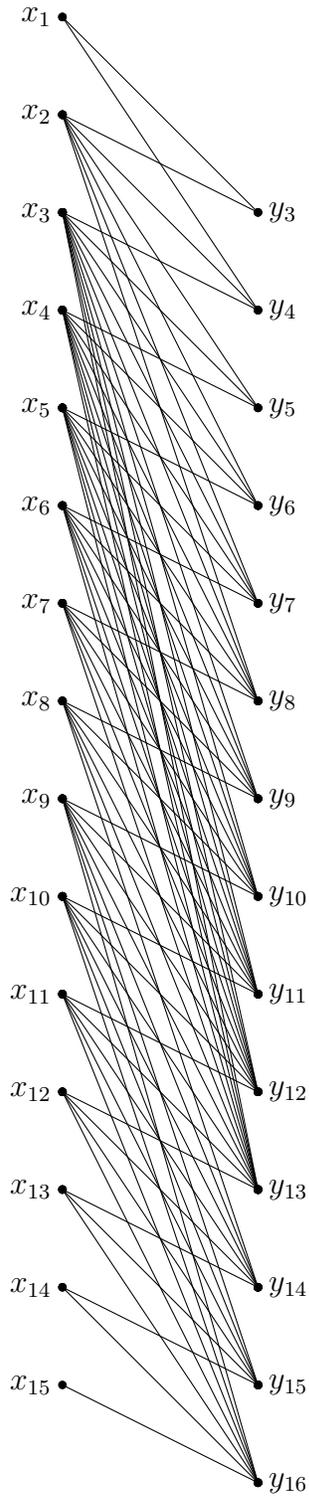

	\subsection{Binomial edge ideals of graphs} 
	
	Let $G$ be a graph with $n$ vertices, and let 
	$S=\KK[x_1,\ldots,x_n,y_1,\ldots,y_n]$ be a polynomial ring over a field $\KK$. The \emph{binomial edge ideal} of $G$, denoted by $J_G$, is defined as follows:
	\[
	J_G=(x_iy_j-x_jy_i: i<j , \{i,j\}\in E(G)).
	\]
	
	Let $<$ be the lexicographic order on $S$ induced by $x_1>\cdots >x_n>y_1> \cdots >y_n$. The following theorem determines the relationship between the regularity and the projective dimension of $J_G$ and its initial ideal in terms of the lexicographic order. We use this relationship later in this section.
	
	\begin{Theorem}\label{Conca}
		{\em (}\cite[Corollary~2.7]{CV}, \cite[Theorem~2.1]{HHHKR}{\em )}
		Let $G$ be a graph. Then:
		\begin{enumerate}
	    \item[\em (a)] $\reg (S/J_G)=\reg (S/\ini_< (J_G))$;
		\item[\em (b)] $\projdim (S/J_G)=\projdim (S/\ini_< (J_G))$. 
		\end{enumerate}
	\end{Theorem}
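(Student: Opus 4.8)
The plan is to derive both (a) and (b) from a single, stronger fact: that $S/J_G$ and $S/\ini_<(J_G)$ have the \emph{same} extremal Betti numbers, in both position and value. This is enough, because the extremal Betti numbers determine the two invariants in question: one always has
\[
\reg(S/I)=\max\{\,j-i:\beta_{i,j}(S/I)\text{ is extremal}\,\},\qquad
\projdim(S/I)=\max\{\,i:\beta_{i,j}(S/I)\text{ is extremal}\,\},
\]
so once the extremal Betti numbers of $S/J_G$ and $S/\ini_<(J_G)$ are matched, (a) and (b) follow simultaneously.

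One of the two inequalities is automatic and requires nothing special about $J_G$: the passage to an initial ideal is a flat (Gr\"obner) degeneration, along which graded Betti numbers can only jump up, so $\beta_{i,j}(S/J_G)\leq\beta_{i,j}(S/\ini_<(J_G))$ for all $i,j$. In particular $\reg(S/J_G)\leq\reg(S/\ini_<(J_G))$ and $\projdim(S/J_G)\leq\projdim(S/\ini_<(J_G))$. The whole point, therefore, is to turn these inequalities into equalities, and this is where the special structure of $J_G$ enters.

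Two external inputs do the job. First, by \cite[Theorem~2.1]{HHHKR}, the binomial edge ideal $J_G$ admits, with respect to the lexicographic order $<$ induced by $x_1>\cdots>x_n>y_1>\cdots>y_n$, a Gr\"obner basis whose initial terms are squarefree; hence $\ini_<(J_G)$ is a \emph{squarefree} monomial ideal. Second, the theorem of Conca and Varbaro \cite[Corollary~2.7]{CV} asserts that whenever a homogeneous ideal $I$ has a squarefree initial ideal for some term order, the extremal Betti numbers of $S/I$ and of $S/\ini_<(I)$ coincide. Applying the second input to $I=J_G$, using the first input to verify its hypothesis, matches the extremal Betti numbers of $S/J_G$ and $S/\ini_<(J_G)$; by the reduction above, this yields (a) and (b).

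The main obstacle is entirely contained in the Conca--Varbaro step, namely upgrading the automatic semicontinuity inequality to equality at the extremal positions. This fails for a general initial ideal and genuinely exploits squarefreeness: through the Stanley--Reisner correspondence and Hochster's formula one controls, in terms of the simplicial complex attached to $\ini_<(J_G)$, the local cohomology (equivalently, the relevant $\Ext$) modules whose (non)vanishing pins down the extremal Betti numbers, and one then shows this extremal data survives the degeneration. Since the present paper imports both \cite{CV} and \cite{HHHKR} wholesale, the only genuine verification left on our side is that the specified lexicographic order is the one under which $\ini_<(J_G)$ is squarefree; granting that, (a) and (b) are immediate.
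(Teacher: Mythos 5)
Your proposal is correct and coincides with the paper's own treatment: the paper states this result purely as a citation, combining \cite[Theorem~2.1]{HHHKR} (the lex Gr\"obner basis of $J_G$ has squarefree initial terms, so $\ini_<(J_G)$ is squarefree) with \cite[Corollary~2.7]{CV} (squarefree Gr\"obner degenerations preserve extremal Betti numbers, hence regularity and projective dimension), which is exactly the reduction you describe. Your added observation that extremal Betti numbers determine both invariants, together with the semicontinuity inequality, is the standard and intended way these two inputs are glued.
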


	In \cite{HHHKR}, those graphs $G$ whose binomial edge ideals admit a quadratic Gr\"obner basis, and hence a quadratic initial ideal, were determined. Indeed, it was shown that the aforementioned binomial generators of $J_G$ provide a quadratic Gr\"obner basis for $J_G$ if and only if $G$ is a closed graph (see \cite[Theorem~1.1]{HHHKR}). A \emph{closed} graph $G$ is a graph which has a labeling of its vertices for which the following property holds: for all edges $\{i,j\}$ and $\{k,\ell\}$ with $i<j$ and $k<\ell$, one has $\{j,\ell\}\in E(G)$ if $i=k$, and $\{i,k\}\in E(G)$ if $j=\ell$. There are several combinatorial characterizations for closed graphs, like \cite[Theorem~2.2]{EHH} where it was shown that $G$ is closed if and only if the vertices of $G$ can be labeled such that all of the cliques (i.e. maximal complete subgraphs) of $G$ are intervals. 
	
	If $G$ is a closed graph, then we have 
	\[
	\ini_<(J_G)=(x_iy_j:i<j , \{i,j\}\in E(G)).
	\]
	This shows that the initial ideal of the binomial edge ideal of a closed graph with $n\geq 2$ vertices is in fact the (monomial) edge ideal of a bipartite graph on the vertex set $X\cup Y$ with $X=\{x_1,\ldots,x_{n-1}\}$ and $Y=\{y_2,\ldots,y_n\}$, and the edge set 
	\[
	\big{\{} \{x_i,y_j\} : i<j , \{i,j\}\in E(G) \big{\}}
	\]  
	which has no isolated vertex. We call this graph the \emph{initial graph} of $G$, and following~\cite{SK}, we denote it by $\ini (G)$. Indeed, we have 
	\[
	I(\ini (G))=\ini_<(J_G).
	\]  
	
	The following proposition shows that closed graphs imply a subclass of strongly biconvex graphs via their initials.
	
	\begin{Proposition}\label{closed-initial}
		Let $G$ be a closed graph with at least two vertices. Then $\ini (G)$ is a strongly biconvex graph.  
	\end{Proposition}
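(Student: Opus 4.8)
The plan is to verify that the initial graph $\ini(G)$ satisfies the two conditions of Definition~\ref{strongly biconvex} directly from the closedness of $G$. Recall that $\ini(G)$ has bipartition $X=\{x_1,\dots,x_{n-1}\}$ and $Y=\{y_2,\dots,y_n\}$ with edge set $\{\{x_i,y_j\}:i<j,\ \{i,j\}\in E(G)\}$, where the labeling on $G$ is chosen (as we may, since $G$ is closed) so that all cliques of $G$ are intervals. The first condition is immediate: by construction $\{x_i,y_j\}\in E(\ini(G))$ forces $i<j$. So the entire content is condition~(2).

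First I would fix an edge $\{x_i,y_j\}\in E(\ini(G))$, so that $\{i,j\}\in E(G)$ with $i<j$, and an index $r$ with $i<r<j$. For part~(i) I must show that if $x_r\in X$ then $\{x_r,y_j\}\in E(\ini(G))$, i.e. $\{r,j\}\in E(G)$; for part~(ii) I must show that if $y_r\in Y$ then $\{x_i,y_r\}\in E(\ini(G))$, i.e. $\{i,r\}\in E(G)$. The key input is the interval-clique characterization \cite[Theorem~2.2]{EHH}: in the chosen labeling every maximal clique of $G$ is a set of consecutive integers $[a,b]$. Since $i<j$ and $\{i,j\}\in E(G)$, the pair $\{i,j\}$ lies in some maximal clique, which is an interval $[a,b]$ with $a\le i<j\le b$. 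Any $r$ with $i<r<j$ then satisfies $a\le r\le b$, so $r$ belongs to that same clique, and hence $\{r,j\}\in E(G)$ and $\{i,r\}\in E(G)$ (both being edges inside a complete subgraph). Translating back through the edge rule of $\ini(G)$, and noting that $r<j$ gives $\{x_r,y_j\}\in E(\ini(G))$ while $i<r$ gives $\{x_i,y_r\}\in E(\ini(G))$, yields both parts of condition~(2).

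Alternatively, if one prefers not to invoke the interval-clique characterization, I would argue directly from the defining closed-graph property: given $\{i,j\}\in E(G)$ and $i<r<j$, one uses the two implications in the definition of closedness to propagate the edge. For part~(i), closedness applied to edges sharing the larger endpoint $j$ lets one deduce $\{r,j\}\in E(G)$; for part~(ii), closedness applied to edges sharing the smaller endpoint $i$ gives $\{i,r\}\in E(G)$. Some care is needed to chain the implications correctly, and this is where the only real subtlety lies, since the raw definition of closedness is a statement about pairs of edges rather than a single edge; the interval formulation sidesteps this bookkeeping, which is why I would present the argument via \cite[Theorem~2.2]{EHH}.

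The main obstacle, such as it is, will be purely notational: keeping straight the shift between the vertex labels $\{i,j\}$ of $G$ and the bipartite labels $\{x_i,y_j\}$ of $\ini(G)$, and confirming that the interior index $r$ genuinely gives rise to an element of $X$ (resp.\ $Y$) of the required form before asserting the corresponding edge. Once the interval-clique description is in hand, the verification of both conditions is a short deduction with no genuine difficulty, so I expect the proof to be brief.
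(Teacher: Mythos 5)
Your proof is correct and takes essentially the same route as the paper's: both check condition~(1) of Definition~\ref{strongly biconvex} directly from the construction of $\ini(G)$, and both establish condition~(2) by invoking the interval-clique characterization of closed graphs from \cite[Theorem~2.2]{EHH}, placing the edge $\{i,j\}$ inside a maximal clique that is an interval containing every $r$ with $i<r<j$. The alternative argument you sketch (chaining the raw closedness implications) is not pursued by the paper either; your primary argument is the paper's proof.
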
  

\begin{proof}
Since $G$ is closed, there exists a labeling for its vertices, like $\{1,\ldots,n\}$, such that the maximal cliques of $G$ are intervals. By the definitinon of $\ini (G)$, condition~(1) in the Definition~\ref{strongly biconvex} clearly holds. Now, let $\{x_i,y_j\}$ be an edge of $\ini (G)$ and let $i<r<j$. 
It follows that $\{i,j\}\in E(G)$, and hence is contained in a maximal clique which is labeled by an interval. Therefore, $\{i,r\}\in E(G)$ and $\{r,j\}\in E(G)$. By the construction of $\ini (G)$, then we deduce that $\{x_i,y_r\}$ and $\{x_r,y_j\}$ are both edges of $\ini (G)$, and hence condition~(2) in the Definition~\ref{strongly biconvex} hold. Thus, $\ini (G)$ is a strongly biconvex graph.  
\end{proof}
	
Note that not all strongly biconvex graphs are initial graph of a closed graph. For instance, the graph shown in Figure~\ref{strongly biconvex example} is not the initial graph of any closed graph, as it has odd number of vertices. 

\medskip	
Now we construct a closed graph on $17$ vertices. Let $G_0$ be the closed graph on the vertex set $\{1,\ldots,17\}$ given by the maximal cliques $[1,3]$, $[2,5]$, $[3,9]$, $[4,14]$, $[9,15]$ and $[10,17]$. Using this graph, we give a negative answer to \cite[Question~1]{AH} in the following theorem. 

\begin{Theorem}\label{nonunique closed}
 Let $S=\KK[x_1,\ldots,x_{17},y_1,\ldots,y_{17}]$ and let $p=\projdim (S/J_{G_0})$. Then $\beta_{p,p+5}(S/J_{G_0})=0$. In particular, $S/J_{G_0}$ does not have a unique extremal Betti number. 
\end{Theorem}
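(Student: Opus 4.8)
The plan is to reduce the binomial statement to the monomial statement already established in Theorem~\ref{H_0}. By Theorem~\ref{Conca}, both the regularity and projective dimension of $S/J_{G_0}$ equal those of $S/\ini_<(J_{G_0})$, and since $G_0$ is closed we have $\ini_<(J_{G_0})=I(\ini(G_0))$. Moreover, extremal Betti numbers of a graded ideal and of its initial ideal need not coincide in general, but the \emph{existence of a unique} extremal Betti number is governed purely by the pair $(\projdim,\reg)$: by the criterion recalled in the excerpt, $S/J_{G_0}$ has a unique extremal Betti number if and only if $\beta_{p,p+r}(S/J_{G_0})\neq 0$ where $p=\projdim(S/J_{G_0})$ and $r=\reg(S/J_{G_0})$, and the same criterion applies to $S/\ini_<(J_{G_0})$ with the \emph{same} $p$ and $r$ by Theorem~\ref{Conca}. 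Since $\beta_{p,p+r}(S/J_{G_0})\neq 0$ forces $\beta_{p,p+r}(S/\ini_<(J_{G_0}))\neq 0$ (the Betti numbers of the initial ideal dominate those of the ideal), it suffices to show $\beta_{p,p+r}(S/\ini_<(J_{G_0}))=0$, i.e. the monomial statement for the initial graph.

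First I would compute $\ini(G_0)$ explicitly from the maximal cliques $[1,3]$, $[2,5]$, $[3,9]$, $[4,14]$, $[9,15]$, $[10,17]$ of $G_0$. Using $I(\ini(G))=\ini_<(J_G)$ and the rule that $\{x_i,y_j\}$ is an edge of $\ini(G_0)$ precisely when $i<j$ and $\{i,j\}\in E(G_0)$, I would translate the clique data into the functions $m(i)$ and $M(i)$ for the initial graph on $X=\{x_1,\dots,x_{16}\}$ and $Y=\{y_2,\dots,y_{17}\}$. The crucial step is to verify that $\ini(G_0)$ is, up to a harmless relabeling/index shift, precisely the graph $H_0$ of Theorem~\ref{H_0} (or at least a graph with the same $m(i)$, $M(i)$ data), so that the purely combinatorial computations already carried out there transfer verbatim. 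Concretely, the clique $[4,14]$ should produce the long-range adjacencies that force the delicate strongly disjoint family analysis, and the overlapping cliques $[3,9]$, $[9,15]$, $[10,17]$ should reproduce the staircase of $M$-values ($M(j)=13$ for a middle range, jumping to $16$ at the top) that drives the proof of $\beta_{p,p+4}(R/I(H_0))=0$.

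Having identified $\ini(G_0)$ with $H_0$, I would invoke Theorem~\ref{H_0} directly: it gives $\reg(R/I(H_0))=4$ and $\projdim(R/I(H_0))=23$, together with $\beta_{23,27}(R/I(H_0))=0$. Transporting along Theorem~\ref{Conca}, this yields $\reg(S/J_{G_0})=4$ and hence, since there is one extra vertex, $\reg(S/J_{G_0})=5$ after accounting for the index shift between the $17$-vertex closed graph and its $16+16$-vertex initial graph --- here I must be careful about exactly how the regularity of the binomial edge ideal relates to the induced matching number of the initial graph, which is why the target exponent is $p+5$ rather than $p+4$. The conclusion $\beta_{p,p+5}(S/J_{G_0})=0$ then follows, and the non-uniqueness of the extremal Betti number is immediate from the criterion, giving the negative answer to \cite[Question~1]{AH}.

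The main obstacle I anticipate is not the algebra but the bookkeeping in the reduction: verifying that $\ini(G_0)$ genuinely coincides with $H_0$ under the correct index convention, and pinning down the exact shift in the regularity and cohomological degree that turns the monomial $p+4$ into the binomial $p+5$. One must check that the single vertex discrepancy (a closed graph on $n$ vertices has an initial graph on $2n-2$ labeled vertices, here $n=17$) and the convention $X=\{x_1,\dots,x_{n-1}\}$, $Y=\{y_2,\dots,y_n\}$ are handled so that the maximum induced matching $\MM(H_0)$ and the dominant strongly disjoint family $\tilde{\MB}$ of Theorem~\ref{H_0} pull back correctly. Once this dictionary is fixed, everything else is an immediate application of Theorems~\ref{Conca} and~\ref{H_0} together with Proposition~\ref{closed-initial}.
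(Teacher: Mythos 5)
Your high-level framework matches the paper's: transport $\projdim$ and $\reg$ across the Gr\"obner degeneration via Theorem~\ref{Conca}, and use the fact that the graded Betti numbers of $S/J_{G_0}$ are bounded above by those of $S/\ini_<(J_{G_0})$ (\cite[Corollary~3.3.3]{HH}) to reduce the claimed vanishing to a statement about the monomial ideal $I(\ini(G_0))$. But your ``crucial step'' --- identifying $\ini(G_0)$ with $H_0$ up to a harmless relabeling, or at least matching their $m(i),M(i)$ data --- is false, and no index convention can fix it. The initial graph of a closed graph on $17$ vertices has $2\cdot 17-2=32$ vertices, whereas $H_0$ has $15+14=29$; the paper even remarks that a strongly biconvex graph with an odd number of vertices can never be the initial graph of a closed graph. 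What is actually true, and what the paper establishes, is that $\ini(G_0)=H''_0$, where $H''_0$ is a shifted copy $H'_0$ of $H_0$ together with \emph{three} extra vertices $x_1,y_2,y_3$ and the extra edges $\{x_1,y_2\},\{x_1,y_3\},\{x_2,y_3\}$. This also explains the exponent: $5=\inm(H''_0)$ is not your ``bookkeeping shift'' of $4$ (regularity is unchanged by relabeling), but the genuine combinatorial fact, computed via Theorem~\ref{induced matching}, that the extra edge $\{x_1,y_2\}$ extends a maximum induced matching of the $H_0$-part by one.

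Because $H''_0\neq H_0$, the vanishing $\beta_{p,p+5}(S/I(H''_0))=0$ does not ``transfer verbatim'' from Theorem~\ref{H_0}; it needs the reduction that occupies the bulk of the paper's proof and is entirely absent from your proposal. Assuming $\beta_{p,p+5}(S/I(H''_0))\neq 0$, Theorem~\ref{weakly chordal-Betti}(a) together with Lemmas~\ref{ordered families} and~\ref{top matching} yields $\MB=\{B_1,\ldots,B_5\}\in\mathcal{OS}(H''_0)$ with $d(\MB)=d(H''_0)=p$ and $|V(\MB)|=p+5$; one shows $x_1$ must lie in $V(B_1)$ (otherwise $\MB$ lives in $H''_0-\{x_1,y_2\}$, whose induced matching number is only $4$), replaces $B_1$ by $B_e$ with $e=\{x_1,y_2\}$ using Theorem~\ref{B_e}, and deletes it to obtain a family in $\mathcal{OS}(H'_0)$ realizing $d(H'_0)$ with four blocks, i.e.\ a nonzero $\beta_{q-4,q}(S/I(H_0))$ with $q-4=\projdim$, contradicting Theorem~\ref{H_0}. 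Without this peeling argument (or a substitute for it), your proof does not go through: the honest outcome of your own anticipated verification step would be that the two graphs differ, at which point the proposal has no mechanism left to conclude.
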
  	
	
\begin{proof}
First we relabel the vertex set of the graph $H_0$ by replacing $x_i$ and $y_i$ with $x_{i+1}$ and $y_{i+1}$, respectively. We denote the obtained graph by $H'_0$. Then, define a new graph $H''_0$ with 
\[
V(H''_0)=V(H'_0)\cup \{x_1,y_2,y_3\}
\] 
and 
\[
E(H''_0)=E(H'_0)\cup \big{\{} \{x_1,y_2\},\{x_1,y_3\}, \{x_2,y_3\} \big{\}}. 
\]
Then it is easy to see that $\ini (G_0)=H''_0$, and hence $H''_0$ is a strongly biconvex graph. So, by Theorem~\ref{Conca}, we have $p=\projdim (S/J_{G_0})=\projdim (S/I(H''_0))$ and 
$\reg (S/J_{G_0})=\reg (S/I(H''_0))=5$. The last equality follows from Theorem~\ref{induced matching}, since $\{x_1,y_2\}$, $\{x_2,y_4\}$, $\{x_4,y_6\}$, $\{x_9,y_{15}\}$ and $\{x_{15},y_{16}\}$ provide a maximum induced matching for $H''_0$. Now, suppose on the contrary that $\beta_{p,p+5}(S/J_{G_0})\neq 0$. Then it follows from \cite[Corollary~3.3.3]{HH} that $\beta_{p,p+5}(S/I(H''_0))\neq 0$. Thus, by Theorem~\ref{weakly chordal-Betti} part~(a), there exists $\MB=\{B_1,\ldots,B_5\}\in \mathcal{OS}(H''_0)$ such that $d(H''_0)=d(\MB)=p$ and $|V(\MB)|=p+5$. If $x_1\notin V(B_1)$, then $V(\MB)\subseteq V(H''_0-\{x_1,y_2\})$ which is a contradiction, because $\inm (H''_0-\{x_1,y_2\})=4$. So, suppose that $x_1\in V(B_1)$. Then by Theorem~\ref{B_e}, there exists $\MB'=\{B'_1,B_2,B_3,B_4,B_5\}\in \mathcal{OS}(H''_0)$ with $B'_1=B_e$ and $d(\MB')=d(H''_0)=p$ where $e=\{x_1,y_2\}$. Therefore, $V(\MB'\setminus \{B'_1\})\subseteq V(H'_0)$ and moreover, we have $\MB'\setminus \{B'_1\}\in \mathcal{OS}(H'_0)$ and $d(H'_0)=d(\MB'\setminus \{B'_1\})$. By Theorem~\ref{weakly chordal-Betti} part~(c) we have $d(H'_0)=\projdim (S/I(H'_0))$. Again using Theorem~\ref{weakly chordal-Betti}, we get $\beta_{q-4,q}(S/I(H'_0))\neq 0$ where $q=|V(\MB'\setminus \{B'_1\})|$ and $d(H'_0)=q-4$. Since $H'_0$ and $H_0$ are isomorphic, it follows that $\beta_{q-4,q}(S/I(H_0))\neq 0$, a contradiction to Theorem~\ref{H_0}. Therefore, we get $\beta_{p,p+5}(S/I(H''_0))=0$. 
\end{proof}	

Next, we construct an infinite family of closed graphs whose binomial edge ideals do not have a unique extremal betti number. For this purpose, we fix the following notation. If $G_1$ and $G_2$ are two closed graphs on disjoint sets of vertices (with the desired labeling) $\{1,\ldots,n_1\}$ and $\{n_1+1,\ldots, n_2\}$, respectively, then by identifying the two vertices $n_1$ and $n_1+1$ we get a new graph which is clearly closed as well.  
Now, for any $t$, by applying the above procedure on $t$ disjoint copies of the closed graph $G_0$, we get a new closed graph on $16t+1$ vertices and we denote it by $G_{0,t}$. 
	
The next corollary discusses non-uniqueness of the extremal betti numbers of the binomial edge ideals of this family of graphs. Here, $S$ is an appropriate polynomial ring with the desired number of variables.  	
	
\begin{Corollary}\label{infinite family}
	Let $t\geq 1$ and $p=\projdim (S/J_{G_0})$. Then $\beta_{tp,tp+5t}(S/J_{G_{0,t}})=0$. In particular, $S/J_{G_{0,t}}$ does not have a unique extremal betti number. 
\end{Corollary}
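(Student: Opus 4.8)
The plan is to reduce the statement for $G_{0,t}$ to the single-copy case $G_0$ already settled in Theorem~\ref{nonunique closed}, using the fact that $G_{0,t}$ is built from $t$ disjoint copies of $G_0$ glued along vertices. First I would record what the gluing does to the initial graph: since identifying the ``top'' vertex of one copy with the ``bottom'' vertex of the next produces a closed graph whose maximal cliques are again intervals, Proposition~\ref{closed-initial} guarantees that $\ini(G_{0,t})$ is strongly biconvex, and it is in fact a ``chain'' of $t$ copies of $H''_0$ joined at single vertices. By Theorem~\ref{Conca} it suffices to work with $S/I(\ini(G_{0,t}))$, and by the relationship between the Betti numbers of $J_G$ and of its initial ideal (\cite[Corollary~3.3.3]{HH}), it is enough to prove the vanishing $\beta_{tp,\,tp+5t}(S/I(\ini(G_{0,t})))=0$ together with $\reg(S/I(\ini(G_{0,t})))=5t$.

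Next I would establish the two multiplicativity facts that make the chain behave like a disjoint union as far as the relevant invariants are concerned. Because consecutive copies share only a single vertex and the edges of $\ini(G_0)$ respect the interval structure, any strongly disjoint family of complete bipartite subgraphs decomposes blockwise: a family $\MB\in\mathcal{OS}(\ini(G_{0,t}))$ restricts to a family in each copy, and conversely families chosen independently in each copy can be concatenated. This should give $d(\ini(G_{0,t}))=t\cdot d(\ini(G_0))=tp$ and $\inm(\ini(G_{0,t}))=t\cdot\inm(\ini(G_0))=5t$, so by Theorem~\ref{weakly chordal-Betti}(b),(c) we get $\reg=5t$ and $\projdim=tp$, matching the indices in the claim.

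With these in place, I would prove the vanishing by contradiction exactly as in Theorem~\ref{nonunique closed}. Assume $\beta_{tp,\,tp+5t}\neq0$; then by Theorem~\ref{weakly chordal-Betti}(a) there is $\MB\in\mathcal{OS}(\ini(G_{0,t}))$ with $|\MB|=5t$, $|V(\MB)|=tp+5t$, and $d(\MB)=tp$. Using Lemma~\ref{ordered families} and Lemma~\ref{top matching} I may assume $\MB$ is ordered, and then the blockwise decomposition forces $\MB$ to split into $t$ subfamilies, one in each copy, each necessarily of size $5$ (since the induced matching number of a single copy is $5$ and the sizes must add to $5t$). For the block sitting in a given copy, the $d$-contribution plus $5$ vertices of that block, matched against the single-copy bound $d(\ini(G_0))=p$, reproduces precisely the situation $\beta_{p,p+5}\neq0$ ruled out in Theorem~\ref{nonunique closed}; hence at least one block yields a contradiction, giving $\beta_{tp,\,tp+5t}=0$. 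Since $0\neq\beta_{tp,\,tp+4t+1}$-type extremal numbers persist in lower strands, $S/J_{G_{0,t}}$ fails uniqueness.

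The main obstacle is the blockwise decomposition of ordered strongly disjoint families across a shared vertex: I must argue that no single $B_i$ straddles the gluing vertex in a way that couples two copies and that the $d$-value and matching size are genuinely additive, so that a counterexample for $G_{0,t}$ localizes to a counterexample for a single $G_0$. Once that localization is justified carefully, everything else is a direct lift of the proofs of Theorem~\ref{nonunique closed} and Theorem~\ref{H_0}.
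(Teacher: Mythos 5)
Your reduction to the initial ideal and your contradiction scheme via Kimura's theorem are sound in outline, but your structural premise about $\ini(G_{0,t})$ is wrong, and that is precisely where your proof has its gap. You describe $\ini(G_{0,t})$ as a ``chain'' of $t$ copies of $H''_0$ joined at single vertices, and you correctly observe that your whole argument then hinges on showing that no $B_i$ straddles a gluing vertex and that $d$ and $\inm$ are additive across such a vertex --- a claim you leave unproved (and which, for genuine vertex-gluings, is not automatic: induced matching numbers and $d$-values do not in general add over a cut vertex; think of two edges sharing an endpoint). In fact no such argument is needed, because $\ini(G_{0,t})$ is a \emph{disjoint union} of $t$ copies of $H''_0$: if $v$ is a vertex of $G_{0,t}$ obtained by identifying the last vertex of one copy of $G_0$ with the first vertex of the next, then $v$ gives rise to \emph{two distinct} vertices $x_v$ and $y_v$ of the initial graph; every edge $\{i,v\}$ of $G_{0,t}$ with $i<v$ becomes an edge at $y_v$ lying inside the earlier copy, every edge $\{v,j\}$ with $v<j$ becomes an edge at $x_v$ lying inside the later copy, and no edge of $\ini(G_{0,t})$ joins the two copies. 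This is the first observation in the paper's proof, and it dissolves your ``main obstacle'': every complete bipartite subgraph in a strongly disjoint family is connected, hence lies entirely in one component, so the blockwise decomposition and the additivity $\projdim = tp$, $\reg = 5t$ that you want are immediate.

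Once disjointness is in hand, your localization argument does go through and is essentially the combinatorial shadow of what the paper does: each component can carry at most $5$ of the $5t$ blocks (its induced matching number is $5$) and at most $p$ of the total $d$-value $tp$ (its projective dimension is $p$), forcing equality in every component and hence $\beta_{p,p+5}(S_\ell/I(H''_{0,\ell}))\neq 0$ for some copy, contradicting Theorem~\ref{nonunique closed}. The paper argues instead on the algebraic side, citing \cite[Lemma~2.1]{JK}: the minimal free resolution of the edge ideal of a disjoint union is the tensor product of those of the components, so $\beta_{tp,tp+5t}$ is a sum of products $\prod_{\ell}\beta_{i_\ell,i_\ell+j_\ell}$ over $\sum i_\ell=tp$, $\sum j_\ell=5t$; since each factor has projective dimension $p$ and regularity $5$, the only surviving term is $\prod_{\ell}\beta_{p,p+5}=0$. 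This is shorter than re-running the Kimura machinery, and it also yields the additivity of $\projdim$ and $\reg$ needed for the ``in particular'' statement --- which, incidentally, you justify incorrectly at the end: non-uniqueness does not come from extremal Betti numbers ``persisting in lower strands,'' but from the fact that $\reg(S/J_{G_{0,t}})=5t$ and $\projdim(S/J_{G_{0,t}})=tp$ (Theorem~\ref{Conca} together with additivity over components), so a unique extremal Betti number would force $\beta_{tp,tp+5t}\neq 0$.
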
	

\begin{proof}
It is easy to see that $\ini (G_{0,t})$ is a graph with $t$ connected components where each of them is a copy of $H''_0$ with the desired labeling according to the labeling of $G_{0,t}$. Let $H''_{0,1},\ldots,H''_{0,t}$ be those copies of $H''_0$. Since $H''_{0,\ell}$'s are on disjoint sets of vertices, it follows from \cite[Lemma~2.1]{JK} that the minimal graded free resolution of $S/I(\ini (G_{0,t}))$ is obtained from the tensor product of the minimal graded free resolutions of $S_{\ell}/I(H''_{0,\ell})$'s, where $S_{\ell}$ is the polynomial ring over $\KK$ with suitable variables. Hence we have
\begin{equation}
\beta_{tp,tp+5t}(S/I(\ini (G_{0,t})))=\sum_{\substack{
		i_1+\cdots +i_{\ell}=pt \\
        j_1+\cdots +j_{\ell}=5t
}} \prod_{\ell=1}^t \beta_{i_{\ell},i_{\ell}+j_{\ell}} (S_{\ell}/I(H''_{0,\ell})).
\nonumber
\end{equation} 
Obviously, $\beta_{i_{\ell},i_{\ell}+j_{\ell}}(S_{\ell}/I(H''_{0,\ell}))=0$ 
for any $\ell$ with $i_{\ell}> p$ and $j_{\ell}> 5$. Thus, we have  $i_1=\cdots=i_t=p$ and $j_1=\cdots=j_t=5$, and hence 
\[
\beta_{tp,tp+5t}(S/I(\ini (G_{0,t})))=\prod_{\ell=1}^t \beta_{p,p+5} (S_{\ell}/I(H''_{0,\ell}))=0
\]
by Theorem~\ref{nonunique closed}. Therefore, by \cite[Corollary~3.3.3]{HH} we have $\beta_{tp,tp+5t}(S/J_{G_{0,t}})=0$, as desired. Then, the ``in particular" part follows from Theorem~\ref{Conca} which implies that 
\[
\reg (S/J_{G_{0,t}})=\reg (S/I(\ini (G_{0,t})))=5t
\] 
and 
\[
\projdim (S/J_{G_{0,t}})=\projdim (S/I(\ini (G_{0,t})))=tp.
\]       	
\end{proof}

	

\par \textbf{Acknowledgments:} 
The research of the first author was in part supported by a grant from IPM (No. 98130013). The research of the second author was in part supported by a grant from IPM (No. 98050212).

\end{document}